\newtheorem{thm}{Theorem}[section]
\newtheorem{lem}[thm]{Lemma}
\newtheorem{defn}[thm]{Definition}
\newcommand{\Aset}{\mathbb{A}}
\newcommand{\Zset}{\mathbb{Z}}
\newcommand{\Rset}{\mathbb{R}}
\newcommand{\Cset}{\mathbb{C}}
\def\orb{{\mathcal O}}
\newcommand{\bB}{\mathbf{B}}
\newcommand{\ab}{{\rm ab}}
\def\PPhi{{\Phi}}
\def\rG{{{\rm G}}}
\def\hlambda{{{\widehat\lambda}}}
\def\cC{{\mathcal C}}
\def\Nor{{\rm N}}
\def\C{{\rm C}}
\def\ie{{\it i.e.,\,}}
\def\cI{{\mathcal I}}
\def\integers{{\mathfrak o}}
\def\RBC{{\rm BC}}
\def\HP{{\rm HP}}
\def\SL{{\rm SL}}
\def\SO{{\rm SO}}
\def\GL{{\rm GL}}
\def\Hom{{\rm Hom}}
\def\conj{{\rm conj}}
\def\Ind{{\rm Ind}}
\def\Lie{{\rm Lie}}
\def\der{{\rm der}}
\def\Nor{{\rm N}}
\def\orb{{\mathcal O}}
\def\Irr{{\rm Irr}}
\def\Lie{{\rm Lie}}
\def\Prim{{\rm Prim}}
\def\cA{{\mathcal A}}
\def\cR{{\mathcal R}}
\def\cG{{\mathcal G}}
\def\cH{{\mathcal H}}
\def\cO{{\mathcal O}}
\def\cT{{\mathcal T}}
\def\cW{{\mathcal W}}
\def\fb{{\mathfrak b}}
\def\fc{{\mathfrak c}}
\def\fg{{\mathfrak g}}
\def\fs{{\mathfrak s}}
\def\ft{{\mathfrak t}}
\def\fB{{\mathfrak B}}
\def\fC{{\mathfrak C}}
\def\fii{{\mathfrak i}}
\def\fn{{\mathfrak n}}
\def\fG{{\mathfrak G}}
\def\fP{{\mathfrak P}}
\def\fQ{{\mathfrak Q}}
\def\integers{{\mathfrak o}}
\def\fB{{\mathfrak B}}
\def\St{{\rm St}}
\def\tH{{\widetilde H}}
\def\tu{{\tilde u}}
\def\q{{/\!/}}
\def\sgn{{\rm sgn}}
\def\pphi{{\phi}}
\def\PPhi_F{{\rm Frob}}
\def\Frob{{\rm Frob}}
\def\Flag{{\rm Flag}}
\begin{document}
\title{Kazhdan-Lusztig parameters and extended quotients}
\author{Anne-Marie Aubert, Paul Baum and Roger Plymen}
\date{}
\maketitle

\medskip

\begin{abstract}The Kazhdan-Lusztig parameters are important parameters in the representation theory of $p$-adic groups and  affine Hecke algebras.   We show that the Kazhdan-Lusztig parameters have a definite geometric structure, namely that of the extended quotient $T\q W$ of a complex torus $T$ by a finite Weyl group $W$.   More generally, we show that the corresponding parameters, in the principal series of a reductive $p$-adic group with connected centre,  admit such a geometric structure.  This confirms, in a special case, a recent geometric conjecture in \cite{ABP}. 

In the course of this study, we provide a unified framework for Kazhdan-Lusztig parameters on the one hand, and Springer parameters on the other hand.
Our framework contains a  complex parameter $s$, and allows us to \emph{interpolate} between $s = 1$ and $s = \sqrt q$.  When $s = 1$, we recover the  parameters which occur in the Springer correspondence; when $s = \sqrt q$, we recover the Kazhdan-Lusztig parameters.
\end{abstract}

\section{Introduction}   The Kazhdan-Lusztig parameters are important parameters in the representation theory of $p$-adic groups and  affine Hecke algebras.   We show that the Kazhdan-Lusztig parameters have a definite geometric structure, namely that of the extended quotient $T\q W$ of a complex torus $T$ by a finite Weyl group $W$.   More generally, we show that the corresponding parameters, in the principal series of a reductive $p$-adic group with connected centre,  admit such a geometric structure.  This confirms, in a special case, a recent geometric conjecture in \cite{ABP}. 

In the course of this study, we provide a unified framework for Kazhdan-Lusztig parameters on the one hand, and Springer parameters on the other hand.
Our framework contains a  complex parameter $s$, and allows us to \emph{interpolate} between $s = 1$ and $s = \sqrt q$.  When $s = 1$, we recover the  parameters which occur in the Springer correspondence; when $s = \sqrt q$, we recover the Kazhdan-Lusztig parameters, see \S5.   Here, $q = q_F$ is the cardinality of the residue field of the underlying local field $F$.

Let $\mathcal{G}$ denote a reductive split $p$-adic group with connected centre, maximal split torus $\mathcal{T}$.   
Let $G$, $T$ denote the Langlands dual of $\mathcal{G}$, $\mathcal{T}$.   Then the quotient variety $T/W$ plays a central role. For example, we have the  Satake isomorphism
\[
\mathcal{H}(\mathcal{G}, \mathcal{K}) \simeq \mathcal{O}(T/W)
\]
where $\mathcal{O}(T/W)$ denotes the coordinate algebra of  $T/W$, see \cite[2.2.1]{Sh}, and    $\cH(\mathcal{G}, \mathcal{K})$ denotes the algebra (under convolution) of $\mathcal{K}$-bi-invariant functions of compact support on $\mathcal{G}$, where $\mathcal{K} = \mathcal{G}(\mathfrak{o}_F)$ . 
In this article, we will show that the \emph{extended quotient} plays a central role in the context of the Kazhdan-Lusztig parameters.

We will prove that the extended quotient $T\q W$ is a model for the  Kazhdan-Lusztig parameters, see \S4.   More generally, let 
\[
\fs = [\mathcal{T}, \chi]_{\mathcal{G}}
\]
be a point in the Bernstein spectrum of $\mathcal{G}$.   We prove that the extended quotient $T\q W^{\fs}$ attached to $\fs$ is a model of the corresponding parameters attached to $\fs$.  This is our main result, Theorem 4.1.  \emph{The principal series of a reductive $p$-adic group with connected centre has a definite geometric structure. The principal series is a disjoint union:  each component is the  extended quotient of the dual torus $T$ by the  finite Weyl group  $W^{\fs}$ attached to $\fs$.}    This confirms, in a special case,  a recent geometric conjecture in \cite{ABP}.

\bigskip

We also show in \S4 that our bijection is compatible with base change, in the special case of the irreducible smooth representations of  $\GL(n)$  which admit nonzero Iwahori fixed vectors.

\bigskip

The details of our interpolation between Springer parameters and Kazhdan-Lusztig parameters will be given in \S5.   Our formulation creates a projection 
\[
\pi_{\sqrt q} : T\q W \to T/W
\]
which provides a model of the \emph{infinitesimal character}. 

\bigskip

We conclude in \S6 with some carefully chosen examples.
\bigskip

Since  the crossed product algebra $\cO(T)\rtimes W$ is
isomorphic to \[\Cset[X(T)]\rtimes W\,\simeq\,\Cset[X(T)\rtimes W],\]
we obtain a bijection 
\[\Prim\,\Cset[X(T)\rtimes W]\to T\q W\] where $\Prim$ denotes primitive ideals.
By composing this bijection with the bijection  $\mu$  in Theorem 4.1, we finally get a bijection 
\[\Prim\,\Cset[X(T)\rtimes W]\to\fP(G)\]
where $\fP(G)$ denotes the Kazhdan-Lusztig parameters.  
Let $\cI$ be a standard Iwahori subgroup in $\cG$ and let $\cH(\cG,\cI)$
denote the corresponding Iwahori-Hecke algebra, \ie the algebra (for the
convolution product) of compactly
supported $\cI$-biinvariant functions on $\cG$. The algebra is
isomorphic to \[
\cH(X(T)\rtimes W,q)
\] the Hecke algebra of the extended 
affine Weyl group 
$X(T)\rtimes W$, with parameter $q$. The simple modules of
$\cH(\cG,\cI)$ are parametrized by $\fP(G)$ \cite{KL}.
 
Hence  $\fP(G)$ provides a parametrization of the simple modules
of both the Iwahori-Hecke algebra $\cH(X(T)\rtimes W,q)$ and of
the group algebra of $X(T)\rtimes W$ (that is, the algebra 
$\cH(X(T)\rtimes W,1)$).

Note that the existence of a bijection between these sets of simple
modules was already proved by Lusztig (see for instance
\cite[p.~81, assertion~(a)]{LuAst}).
Lusztig's construction needs to pass through the asymptotic Hecke algebra $J$, while
 we have replaced the use of $J$ by the use of the extended
quotient $T\q W$ (which is much simpler to construct).

\section{Extended quotients}  Let $\mathcal{O}(T)$ denote the coordinate algebra of the complex torus $T$.    In noncommutative geometry, one of the elementary, yet fundamental, concepts is that of \emph{noncommutative quotient} \cite[Example 2.5.3]{K}. The \emph{noncommutative quotient} of $T$ by $W$ is the crossed product algebra
\[
\mathcal{O}(T) \rtimes W.
\]
This is a noncommutative unital $\Cset$-algebra.   We need to filter this idea through periodic cyclic homology.  We have an isomorphism
\[
\HP_*(\mathcal{O}(T) \rtimes W) \simeq H^*(T\q W ; \C)
\]
where $\HP_*$ denotes periodic cyclic homology, $H^*$ denotes cohomology, and $T\q W$ is the extended quotient of $T$ by $W$, see \cite{B}.      We recall the definition of the extended quotient $T\q W$. 
\begin{defn} Let 
\[
\widetilde{T} = \{(t,w) \in T \times W :  w \cdot  t = t\}.
\]
The extended quotient
is the quotient
\[
T\q W  : = \widetilde{T}/W
\]
where $W$ acts via $\alpha(t,w) = (\alpha \cdot t, \alpha w \alpha^{-1})$ with $\alpha \in W$. 
\end{defn} 

 Let $W(t)$ denote the   isotropy subgroup of $t$.   Let $\conj (W(t))$ denote the set of conjugacy classes in $W(t)$, and let $[w]$ denote 
 the conjugacy class of $w$ in $W(t)$. The map
\[
\{(t,w)  : t \in T, w \in W(t)\} \to \{(t,c) : t \in T, c \in \conj (W(t))\}
\]
\[
 (t,w) \mapsto (t,[w])
\]
induces a canonical bijection
\[
\{(t,w) : t \in T, w \in W(t)\}/ W \to \{(t,c) :  t \in T, c \in \conj(W(t))\}/ W
\]
where $W$ acts via $\alpha (t,c) = ( \alpha \cdot t, [ \alpha x \alpha^{-1}])$ with $x \in c$.    

Let $\Irr(W(t))$ denote the set of equivalence classes of irreducible representations of $W(t)$.   A choice of bijection between 
$\conj(W(t))$ and $\Irr(W(t))$ then creates a  bijection
\[ 
T\q W \simeq   \{( t, \tau) :  t \in T, \tau \in \Irr(W(t))\}/ W 
  \]  
where $W$ acts via $\alpha (t, \tau) = (\alpha \cdot t, \alpha_*(\tau))$.  
Here, $\alpha_*(\tau)$ is the push-forward of $\tau$ to an irreducible representation of $W(\alpha \cdot t)$. 
  
  This leads us to 
  \begin{defn}
  The extended quotient of the second kind is
  \[ 
(T\q W)_2: =   \{( t, \tau) :  t \in T, \tau \in \Irr(W(t))\}/ W 
  \] 
  \end{defn}
  
  We then have a non-canonical bijection 
 \[
T\q W   \simeq  (T\q W)_2.
\]

Let $T^w$ denote the fixed set $\{t \in T : w \cdot t = t\}$, and let $Z(w)$ denote the centralizer of $w$ in $W$. 
We have 
\begin{align} \label{eqn:(1)}
T\q W = \bigsqcup T^w /Z(w)
\end{align}
where one $w$ is chosen in each conjugacy class in $W$.   Therefore $T\q W$  is a complex affine algebraic variety.   The number of irreducible
components in $T\q W$ is bounded below by $|\conj(W)|$.

The Jacobson topology on the primitive ideal spectrum of $\mathcal{O}(T) \rtimes W$  
induces a topology on $(T\q W)_2$ such that the identity map 
\[
T\q W \to (T\q W)_2
\]
is continuous. From the point of view of noncommutative geometry \cite{K}, the extended quotient of the second kind is a \emph{noncommutative complex affine algebraic variety}.

The transformation groupoid $ T \rtimes W$ is naturally an \'etale groupoid, see \cite[p. 45]{K}.   Its groupoid algebra $\Cset [T \rtimes W]$ is the crossed product algebra
\[
\mathcal{O}(T) \rtimes W .
\]
In the groupoid $T \rtimes W$, we have
\[
\text{source}(t,w) = t, \quad \text{target}(t,w) = w \cdot t
\]
so that the set
\[
\{(t,w) \in T \times W : w \cdot t = t\} 
\]
comprises all the arrows which are \emph{loops}.

The decomposition of the groupoid $T \rtimes W$ into transitive groupoids leads naturally to  Eqn.~(\ref{eqn:(1)}).   
The groupoid  $T \rtimes W$ seems to be a bridge between $T\q W$ and $(T\q W)_2$.

\bigskip

In the context of algebraic geometry, the extended quotient is known as the inertia stack \cite{M}, in which case the notation is
\[
I(T): = \widetilde{T}, \quad \quad  [I(T)/W]: =  T\q W.
\]

\section{The parameters for the principal series}
\label{sec:unram}

Let $\cW_F$ denote the Weil group of $F$, let $I_F$ be the inertia
subgroup of $\cW_F$. Let
$\PPhi_F \subset \cW_F$ denote a geometric Frobenius (a generator of
$\cW_F/I_F \simeq \Zset$). We have $\cW_F/I_F = <\Frob>$. We will think of this as a multiplicative group, with identity element $1$.

Let $\fP(G)$ denote the set of conjugacy classes in
$G$ of pairs $(\Phi,\rho)$ such that $\Phi$ is a morphism
\[
\Phi\colon \cW_F/I_F \times \SL(2,\Cset) \to G\] which is
\emph{admissible}, \ie $\Phi(1, - )$ is a morphism of complex algebraic groups, 
$\Phi(\Frob,1)$ is a semisimple element in
$G$, and $\rho$ is defined in the following way. 

We will adopt the formulation of Reeder \cite{R}.   
Choose a Borel subgroup $B_2$ in $\SL(2,\Cset)$ and let 
 $S_{\Phi} = \Phi(\cW_F \times B_2)$, a solvable subgroup of $G$. 
 Let $\bB^{\Phi}$ denote the variety of Borel subgroups of $G$ containing $S_{\Phi}$. 
 Let $G_{\Phi}$ be the centralizer 
 in $G$ of the image of $\Phi$. Then $G_{\Phi}$ acts naturally on $\bB^{\Phi}$, and hence on the 
 singular homology  $H_*(\bB^{\Phi},\Cset)$. Then $\rho$ is an irreducible representation of $G_{\Phi}$ which appears in the action 
 of $G_{\Phi}$ on $H_*(\bB^{\Phi},\Cset)$.
 
 A Reeder parameter $(\Phi, \rho)$ determines a Kazhdan-Lusztig parameter $(\sigma, u, \rho)$ in the following way. Let 
\[
u_0 =
\left(
 \begin{array}{cc}
1 & 1 \\
0 & 1 
\end{array}
\right) , \quad
T_x =
\left(
\begin{array}{cc}
x & 0\\
0 &  x^{-1}
\end{array}
\right)
\]
and set
\[
u = \Phi(1,u_0), \quad \sigma = \Phi(\Frob, T_{\sqrt q})
\] 
where $q$ is the cardinality of the residue field $k_F$.   Then the triple $(\sigma, u, \rho)$ is a Kazhdan-Lusztig parameter. 
Since $\Phi$ is a homomorphism and 
\[
T_{\sqrt q} \, u_0 \, T_{\sqrt q}^{-1} = \left(
\begin{array}{cc}
1 & q\\
0 &  1
\end{array}
\right) = u_0^q
\]
it follows that 
\[
\sigma u \sigma^{-1} = u^q.
\]

It is worth noting that the set $\fP(G)$ is $q$-independent.

\bigskip

We now move on to the rest of the principal series.   We recall that  
 $\mathcal{G}$ denotes a reductive split $p$-adic group \emph{with 
connected centre}, maximal split torus $\mathcal{T}$, and
$G$, $T$ denote the Langlands dual of $\mathcal{G}$, $\mathcal{T}$.
We assume in addition that the residual characteristic of $F$ is not a
torsion prime for $G$.
 
Let $\fQ(G)$ denote the set of conjugacy classes in
$G$ of pairs $(\Phi,\rho)$ such that $\Phi$ is a continuous morphism
\[
\Phi\colon \cW_F\times \SL(2,\Cset) \to G\] which is
rational on $\SL(2,\Cset)$ and such that $\Phi(\cW_F)$ consists of semisimple 
element in
$G$, and $\rho$ is defined in the following way.

Choose a Borel subgroup $B_2$ in $\SL(2,\Cset)$ and let $S_{\Phi} = \Phi(\cW_F \times B_2)$.
Let $\bB^{\Phi}$ denote the variety of Borel subgroups of $G$ containing
$S_{\Phi}$.
The variety $\bB^{\Phi}$ is non-empty if and only if $\Phi$ factors
through the topological abelianization
$\cW_F^{\ab}:=\cW_F/\overline{[\cW_F,\cW_F]}$ of $\cW_F$ (see
\cite[\S~4.2]{R}). We will assume that $\bB^{\Phi}$ is non-empty, and we
will still denote by $\Phi$ the homomorphim
\[
\Phi\colon \cW_F^{\ab}\times \SL(2,\Cset) \to G.\]
Let $I_F^{\ab}$ denote the image of $I_F$ in $\cW_F^{\ab}$.
The choice of Frobenius $\Frob$ determines a splitting
\begin{equation} \label{eqn:splitting}
\cW_F^{\ab}=I_F^{\ab}\times\langle\Frob\rangle.\end{equation} 
Let $G_{\Phi}$ be the centralizer in $G$ of the image of $\Phi$. Then 
$G_{\Phi}$ acts naturally on $\bB^{\Phi}$, and hence on the 
singular homology of $H_*(\bB^{\Phi},\Cset)$. Then $\rho$ is an
irreducible representation of $G_{\Phi}$ which appears in the action 
 of $G_{\Phi}$ on $H_*(\bB^{\Phi},\Cset)$. 

Let $\chi$ be a smooth quasicharacter of $\cT$ and let $\fs = [\cT,\chi]_{\cG}$ be the point in the Bernstein spectrum $\fB(\cG)$ determined by $\chi$.   Let 
\begin{equation} \label{eqn:Ws}
W^{\fs}  = \{w \in W : w\cdot \fs = \fs\}.
\end{equation}
Let $X$ denote the rational co-character group of $\cT$, identified with
the rational character group of $T$. Let $\cT_0$ be the maximal compact
subgroup of $\cT$. By choosing a uniformizer in $F$, we obtain a splitting
$$\cT=\cT_0\times X,$$ according to which
\[\chi = \lambda\otimes t,\]
where $\lambda$ is a character of $\cT_0$, and $t\in T$. Let
$r_F\colon \cW_F^{\ab}\to F^\times$ denote the reciprocity isomorphism of
abelian class field theory, and let
\begin{equation} \label{eqn:hl}
\hlambda\colon I_F^{\ab}\to T\end{equation} 
be the unique homomorphism satisfying
\begin{equation} \label{eqn:dd}
\eta\circ\hlambda=\lambda\circ\eta\circ r_F,\quad \text{for all $\eta\in
X$},\end{equation}
where $\eta$ is viewed as a character of $T$ on the left side and as a
co-character of $\cT$ on the right side of~(\ref{eqn:dd}).

Let $H$ denote the centralizer in $G$ of the image of $\hlambda$:
\begin{equation} \label{eqn:H}
H=G_{\hlambda}.\end{equation}
The assumption that $G$ has simply-connected derived group implies that
the group $H$ is connected (see \cite[p.~396]{Roc}). 
Note that $H$ itself does not
have simply-connected derived group in general (for instance, if $G$ is
the exceptional group of type $\rG_2$, and $\sigma$ is the tensor square
of a ramified quadratic character of $F^\times$ then $H=\SO(4,\Cset)$).
 
Let $\fQ(G)_{\hlambda}$ be the subset of $\fQ(G)$ consisting of
the $G$-conjugacy classes of all the pairs $(\Phi,\rho)$ such that
$\Phi$ factors through $\cW_F^{\ab}$ and
\[\Phi|_{I_F^{\ab}}=\hlambda.\]
The group $W^\fs$ defined in~(\ref{eqn:Ws}) is a Weyl group: it is the Weyl 
group of $H$ 
(indeed, in the decomposition of \cite[Lemma~8.1~(i)]{Roc} the group 
$C_\chi$ is trivial as proven on \cite[p.~396]{Roc}):
\[W^\fs=W_H.\]

\section{Main result}

\begin{thm} \label{thm:ps} There is a canonical bijection of the extended quotient of the second kind  $(T\q W^\fs)_2$ onto the set
$\fQ(G)_{\hlambda}$ of conjugacy classes of Reeder parameters attached to the point $\fs$ in the Bernstein spectrum of $\mathcal{G}$.  It follows that there is a bijection
\[
\mu^{\fs} : T\q W^{\fs} \simeq \fQ(G)_{\hlambda}
\]
so that the extended quotient $T\q W^{\fs}$ is a model for the Reeder parameters attached to the point $\fs$. 
\end{thm}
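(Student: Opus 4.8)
\emph{Overview.} I would sort the parameters in $\fQ(G)_{\hlambda}$ according to their semisimple Frobenius part and then identify each resulting fibre, by a Springer‑type correspondence, with the irreducible representations of an isotropy group $W_H(t)$. Throughout I use the facts recorded above: $W^\fs=W_H$ with $H=G_{\hlambda}$ connected, and $T\subseteq H$ is a maximal torus of $H$, so $W_H=N_H(T)/T$. Since $\Phi|_{I_F^{\ab}}=\hlambda$ is fixed, any element of $G$ carrying one member of $\fQ(G)_{\hlambda}$ to another centralises $\hlambda(I_F^{\ab})$, hence lies in $H$. Using the splitting $\cW_F^{\ab}=I_F^{\ab}\times\langle\Frob\rangle$ and the fact that $I_F^{\ab}$ commutes with $\SL(2)$, the data of $\Phi$ beyond $\Phi|_{I_F^{\ab}}=\hlambda$ is a semisimple $s:=\Phi(\Frob,1)\in H$ together with a homomorphism $\gamma\colon\SL(2,\Cset)\to M^\circ$, where $M:=Z_H(s)$ --- or, by Jacobson--Morozov over $\Cset$, the $M^\circ$‑conjugacy class of the unipotent $u:=\gamma(u_0)$ in $M^\circ$. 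Moreover $G_\Phi=Z_M(\gamma(\SL(2)))$ is reductive with component group $\pi_0(Z_M(u))$ (Kostant), through which it acts on $H_*(\bB^{\Phi})$. Conjugating $s$ into $T$ records it as a point $[t]\in T/W_H$, and (writing now $M=Z_H(t)$) the isotropy group of $t$ sits in a canonical exact sequence
\[
1\to W_{M^\circ}\to W_H(t)\to\pi_0(M)\to 1,
\]
since $N_M(T)/T=W_H(t)$, $N_{M^\circ}(T)/T=W_{M^\circ}$ and $M=M^\circ\cdot N_M(T)$. Hence $\fQ(G)_{\hlambda}\simeq\bigsqcup_{[t]\in T/W_H}\cD(Z_H(t))$, where $\cD(M)$ denotes the set of $M$‑conjugacy classes of pairs $(u,\rho)$ with $u\in M$ unipotent and $\rho\in\Irr(\pi_0(Z_M(u)))$ occurring in $H_*(\bB^{\Phi})$; and the theorem is reduced to a canonical, $W_H$‑equivariant bijection $\cD(Z_H(t))\simeq\Irr(W_H(t))$ for each $t$.

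\emph{The Springer correspondence, allowing disconnected centralisers.} By Reeder's analysis of the variety $\bB^{\Phi}$ (\cite[\S4]{R}), the set $\cD(Z_H(t))$ is governed by a Springer correspondence for the reductive group $M=Z_H(t)$: $H_*(\bB^{\Phi})$ carries, up to a harmless multiplicity, the Springer module of $M$ attached to $u$ together with a commuting action of $W_M=N_M(T)/T=W_H(t)$, and the $\rho$ occurring in it are exactly those in the image of the Springer correspondence of $M$. When $Z_H(t)$ is connected this is the classical Springer correspondence, a bijection onto $\Irr(W_H(t))$ once the usual normalisation is fixed. When $Z_H(t)$ is disconnected --- which can occur, since $H$ need not have simply‑connected derived group --- I would deduce the bijection from the connected Springer correspondence of $M^\circ$ by Clifford theory: because $N_M(T)$ acts on $M^\circ$ preserving $T$, the $M^\circ$‑Springer correspondence intertwines the $\pi_0(M)$‑action on $\Irr(W_{M^\circ})$ with the $\pi_0(M)$‑action on the $M^\circ$‑level geometric data, so both $\Irr(W_H(t))$ (via $1\to W_{M^\circ}\to W_H(t)\to\pi_0(M)\to1$) and $\cD(M)$ (via $M^\circ\triangleleft M$) are classified by the same data --- a $\pi_0(M)$‑orbit together with an irreducible representation of a fixed central extension of the stabiliser of a point of that orbit --- and the equivariant bijection transports one classification onto the other. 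Running this over all $[t]$ and passing to $W_H$‑orbits yields
\[
\fQ(G)_{\hlambda}\;\simeq\;\bigsqcup_{[t]\in T/W_H}\Irr(W_H(t))\;=\;(T\q W^\fs)_2;
\]
composing with the non‑canonical bijection $T\q W^\fs\simeq(T\q W^\fs)_2$ of \S2 then produces $\mu^\fs$.

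\emph{Main obstacle.} The delicate point is the disconnected case: one must set up a Springer correspondence for the possibly‑disconnected reductive group $Z_H(t)$ and match it, through Clifford theory, with $\Irr(W_H(t))$, checking not only that the $\pi_0(Z_H(t))$‑orbits correspond on the representation‑theoretic and on the geometric sides but also that the relevant central extensions (the $2$‑cocycles attached to point‑stabilisers) agree --- without this the two sides would only be known to be equinumerous. The identification $W_H(t)/W_{Z_H(t)^\circ}\cong\pi_0(Z_H(t))$ is exactly what makes this comparison feasible, and verifying the compatibility, together with pinning down (following Reeder) the precise sense in which $H_*(\bB^{\Phi})$ realises the Springer module of $Z_H(t)$, is where the real work lies. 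The remaining ingredients --- the reduction from $G$ to $H$, the repackaging via Jacobson--Morozov, and assembling the pieces over $T/W_H$ --- are essentially formal and rest on the fact, used throughout, that $T\subseteq H$.
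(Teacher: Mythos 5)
Your overall strategy is the same as the paper's: decompose the isotropy group $W_H(t)$ (using that $W^{\fs}=W_H$, $H=G_{\hlambda}$ connected) into a connected piece $W_{\fG(t)}$ with $\fG(t)=C_H^0(t)$ and the component group $\pi_0(C_H(t))$, apply the Springer correspondence on the connected piece, and use Clifford theory to handle the component group. The paper's Lemmas~4.2 and 4.3 establish exactly the semidirect product $W_H(t)=W_{\fG(t)}\rtimes A_H(t)$ that you package as an exact sequence, and its Lemmas~4.5, 4.6, 4.10--4.13 perform the chain of comparisons $\cR_{top}(A,\bB_x)=\cR(A,\bB^{\Xi})=\cR(A,\bB^{\Psi})=\cR(A,\bB^{\Upsilon})=\cR(A^+,\bB^{\Upsilon})=\cR(A^+,\bB^{\Phi})$ that you compress into ``Reeder's analysis of $\bB^{\Phi}$''. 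So the route is not genuinely different.

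However, the step you flag as the main obstacle --- matching the $2$-cocycles on the point stabilisers so that the Clifford classifications of $\Irr(W_H(t))$ and of the Reeder data actually coincide, rather than merely being equinumerous --- is left open in your proposal, and this is precisely where the paper does nontrivial work. Its Lemma~4.8 proves $A_{\tau(x,\varrho)}=A_{V(x,\varrho)}$ and shows the two cocycles can be chosen equal by introducing an auxiliary object you did not mention: the real tempered simple module $V(x,\varrho)$ of the Iwahori--Hecke algebra $\cH(\cC(t),\cI)$ of a predual $\cC(t)$ of $\fG(t)$. The Barbasch--Moy description of the $W_{\fG(t)}$-structure of $V(x,\varrho)$, using the closure order on nilpotent orbits, forces
\[
\dim_{\Cset}\Hom_{W_{\fG(t)}}\bigl(\tau(x,\varrho),V(x,\varrho)\bigr)=1,
\]
and this one-dimensionality is what allows the intertwiners $\pphi_a^{V}$ and $\pphi_a^{\tau}$ to be normalised compatibly, pinning the cocycle of $V$ to that of $\tau$. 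The Ram--Ramagge Clifford theorem (Theorem~4.7) is then applied on both sides with this \emph{common} cocycle, which is what upgrades equinumerosity to a bijection. Without that Hecke-algebra detour (or an argument doing the same job) your sketch does not close; so you have correctly located the gap but not filled it, and your appeal to ``the $M^{\circ}$-Springer correspondence intertwines the $\pi_0(M)$-action'' asserts precisely the compatibility that still has to be proved.
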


The proof of this theorem requires a series of Lemmas.    We recall that 
\[
W^{\fs} = W_H.
\]
The plan of our proof is to begin with an element in the extended quotient of the second kind $(T\q W_H)_2$. 
   Lemmas  4.2 and 4.3 allow us to infer that $W_H(t)$ is a semidirect product
$W_{\fG(t)}\rtimes A_H(t)$.    We now combine the Springer correspondence for $W_{\fG(t)}$ with  Clifford theory for semidirect products (Clifford theory is a noncommutative version of the Mackey machine).   This creates $4$ parameters 
$(t,x,\varrho, \psi)$.  
With this data, and the character $\lambda$ determined by the point $
\fs$, we construct a Reeder parameter $(\Phi, \rho)$ such that
$\Phi(\Frob,1)=t$, $\Phi(1,u_0)=\exp x$ and
the restriction of $\rho$ contains $\varrho$.

\begin{lem} \label{lem:disconnected}
Let $M$ be a reductive algebraic group. Let $M^0$ denote the connected
component of the identity in $M$. Let $T$ be a maximal torus of $M^0$ and
let $B$ be a Borel subgroup of $M^0$ containing $T$. Let 
\[W_{M^0}(T):=\Nor_{M^0}(T)/T\]
denote the Weyl group of $M^0$ with respect to $T$. We set
\[
W_M(T):=\Nor_M(T)/T.\]
\begin{enumerate}
\item[{\rm (1)}] 
The group $W_M(T)$ has the semidirect product decomposition:
\[W_M(T)=W_{M^0}(T)\rtimes (\Nor_M(T,B)/T),\]
where $\Nor_M(T,B)$ denotes the normalizer in $M$ of the pair $(T,B)$.
\item[{\rm (2)}]
We have
\[\Nor_M(T,B)/T\simeq M/M^0=\pi_0(M).\] 
\end{enumerate}
\end{lem}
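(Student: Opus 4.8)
The plan is to exhibit an explicit splitting of the exact sequence of groups
\[
1 \To W_{M^0}(T) \To W_M(T) \To W_M(T)/W_{M^0}(T) \To 1,
\]
using the rigidity of Borel pairs. First I would fix the Borel subgroup $B \supset T$ inside $M^0$ and consider the subgroup $\Nor_M(T,B) = \{m \in M : mTm^{-1} = T,\ mBm^{-1} = B\}$. The key classical fact I would invoke is that in the connected reductive group $M^0$ the only element normalizing both $T$ and $B$ is the trivial coset, i.e. $\Nor_{M^0}(T,B) = T$; equivalently $W_{M^0}(T)$ acts simply transitively on the set of Borel subgroups of $M^0$ containing $T$. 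From this it follows immediately that $\Nor_M(T,B) \cap \Nor_{M^0}(T) = T$, so that the composite $\Nor_M(T,B)/T \hookrightarrow W_M(T) \twoheadrightarrow W_M(T)/W_{M^0}(T)$ is injective.

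Next I would prove surjectivity of that composite, which gives both the splitting in part (1) and the identification in part (2). Given any $m \in \Nor_M(T)$, the conjugate $mBm^{-1}$ is again a Borel subgroup of $M^0$ containing $T$ (here one uses that $M^0$ is normal in $M$, so $m$ normalizes $M^0$ and hence sends Borels of $M^0$ to Borels of $M^0$). By the simple transitivity of $W_{M^0}(T)$ on such Borels, there is $n \in \Nor_{M^0}(T)$ with $n B n^{-1} = m B m^{-1}$, whence $n^{-1}m \in \Nor_M(T,B)$. Thus $\Nor_M(T) = \Nor_{M^0}(T)\cdot \Nor_M(T,B)$, and combined with the intersection being $T$ this yields the internal semidirect product decomposition $W_M(T) = W_{M^0}(T) \rtimes (\Nor_M(T,B)/T)$, proving (1). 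For (2), I would observe that $\Nor_M(T,B)$ meets every connected component of $M$: given $m \in M$, its image $m\cdot(T,B)$ under conjugation is another pair (maximal torus, Borel) of $M^0$, and since $M^0$ acts transitively on such pairs there is $g \in M^0$ with $gm \in \Nor_M(T,B)$; hence $\Nor_M(T,B) \twoheadrightarrow M/M^0 = \pi_0(M)$. Since the kernel of this map is $\Nor_{M^0}(T,B) = T$, we get $\Nor_M(T,B)/T \simeq \pi_0(M)$.

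The main technical point — and the only place a little care is needed — is the normality claim that conjugation by an element of $\Nor_M(T)$ (or of $M$ generally) carries Borel subgroups of $M^0$ to Borel subgroups of $M^0$ and maximal tori to maximal tori; this is where $M^0 \trianglelefteq M$ is used, and it is what allows the transitivity results for the \emph{connected} group $M^0$ to be leveraged. Everything else is the standard conjugacy theory of Borel subgroups and maximal tori in connected reductive groups (Borel–Tits), so no genuinely new argument is required; the lemma is essentially a packaging of these facts. I would therefore keep the write-up short, citing the simple transitivity of the Weyl group on Borels containing a fixed maximal torus and the conjugacy of Borel pairs in $M^0$, and spelling out only the two one-line verifications (intersection $= T$, and surjectivity onto $\pi_0(M)$) above.
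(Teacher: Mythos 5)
Your proposal is correct and follows essentially the same route as the paper: both arguments hinge on the simple transitivity of $W_{M^0}(T)$ on Borel subgroups of $M^0$ containing $T$ (giving $\Nor_{M^0}(T,B)=T$, hence the trivial intersection and the ability to adjust any $m\in\Nor_M(T)$ into $\Nor_M(T,B)$ by an element of $\Nor_{M^0}(T)$), and both use conjugacy of Borel pairs in the connected group $M^0$ to get surjectivity of $\Nor_M(T,B)\to\pi_0(M)$. The only cosmetic difference is that you phrase the decomposition as a splitting of the exact sequence $1\to W_{M^0}(T)\to W_M(T)\to W_M(T)/W_{M^0}(T)\to 1$ and cite transitivity of $M^0$ on Borel pairs in one step, where the paper spells out the two-stage adjustment (first of the Borel, then of the torus inside it) and verifies normality of $W_{M^0}(T)$ explicitly.
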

\begin{proof}
The group $W_{M^0}(T)$ is a normal subgroup of $W_M(T)$. Indeed,
let $n\in\Nor_{M^0}(T)$ and let $n'\in\Nor_M(T)$, then $n'nn^{\prime-1}$
belongs to $M^0$ (since the latter is normal in $M$) and normalizes $T$, that is,
$n'nn^{\prime-1}\in\Nor_{M^0}(T)$. On the other hand,
$n'(nT)n^{\prime-1}=n'nn^{\prime-1}(n'Tn^{\prime-1})=n'nn^{\prime-1}T$.

Let $w\in W_M(T)$. Then $wBw^{-1}$ is a Borel subgroup of $M^0$
(since, by definition, the Borel subgroups of an algebraic group are 
the maximal closed connected solvable subgroups). Moreover, $wBw^{-1}$  
contains $T$. 
In a connected reductive algebraic group, the intersection of two Borel 
subgroups always contains a maximal torus and the two Borel subgroups are 
conjugate by a element of the normalizer of that torus. Hence $B$ and
$wBw^{-1}$ are conjugate by an element $w_1$ of $W_{M^0}(T)$.
It follows that $w_1^{-1}w$ normalises $B$. Hence
\[w_1^{-1}w\in W_M(T)\cap \Nor_{M}(B)=\Nor_{M}(T,B)/T,\] 
that is, \[W_M(T)=W_{M^0}(T)\cdot(\Nor_M(T,B)/T).\] 
Finally, we have
\[W_{M^0}(T)\cap(\Nor_M(T,B)/T)=\Nor_{M^0}(T,B)/T=\{1\},\] 
since $\Nor_{M^0}(B)=B$ and $B\cap \Nor_{M^0}(T)=T$. This proves (1).

We will now prove (2). We consider the following map:
\[\Nor_{M}(T,B)/T\to M/M^0\quad\quad mT\mapsto mM^0.\leqno{(*)}\]
It is injective. Indeed, let $m,m'\in\Nor_{M}(T,B)$ such that
$mM^0=m'M^0$. Then $m^{-1}m'\in M^0\cap\Nor_{M}(T,B)=\Nor_{M^0}(T,B)=T$
(as we have seen above). Hence $mT=m'T$.

On the other hand, let $m$ be an element in $M$. Then $m^{-1}Bm$ is a
Borel subgroup of $M^0$, hence there exists $m_1\in M^0$ such that
$m^{-1}Bm=m_1^{-1}Bm_1$. It follows that $m_1m^{-1}\in\Nor_M(B)$. Also
$m_1m^{-1}Tmm_1^{-1}$ is a torus of $M^0$ which is contained in 
$m_1m^{-1}Bmm_1^{-1}=B$. Hence $T$ and $m_1m^{-1}Tmm_1^{-1}$ are conjugate
in $B$: there is $b\in B$ such that $m_1m^{-1}Tmm_1^{-1}=b^{-1}Tb$. Then 
$n:=bm_1m^{-1}\in\Nor_M(T,B)$. It gives $m=n^{-1}bm_1$. Since $bm_1\in
M^0$, we obtain $mM^0=n^{-1}M^0$. Hence the map $(*)$ is surjective.
\end{proof}

In order to approach the notation in \cite[p.471]{CG}, we let $\fG(t)$ denote the identity component of the centralizer $C_H(t)$:
\[
\fG(t): = C_H^0(t).
\]
Let $W_{\fG(t)}$ denote the Weyl group of $\fG(t)$.

\begin{lem} \label{lem:centrals} 
Let $t \in T$. 
The isotropy subgroup $W_H(t)$ is the group of $\Nor_{C_H(t)}(T)/T$, and we
have
\[W_H(t) = W_{\fG(t)}\rtimes A_H(t)\quad\text{with $A_H(t):=\pi_0(C_H(t))$.}\]
In the case when $H$ has simply-connected derived group, the
group $C_H(t)$ is connected and $W_H(t)$ is then the Weyl group of
$C_H(t)=\fG(t)$.
\end{lem}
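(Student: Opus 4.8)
The plan is to deduce the lemma from Lemma~\ref{lem:disconnected} applied to the group $M=C_H(t)$, after first checking that this $M$ is a reductive group of the kind required there.

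First I would pin down the isotropy group $W_H(t)$. Since $t\in T$ and $T$ is abelian, $T\subseteq C_H(t)$, so $\Nor_{C_H(t)}(T)=\Nor_H(T)\cap C_H(t)$ is a subgroup of $\Nor_H(T)$ containing $T$. For $n\in\Nor_H(T)$ and $t'\in T$ we have $(nt')\,t\,(nt')^{-1}=n\,t\,n^{-1}$, so whether a representative $n$ centralises $t$ depends only on the class $nT=w\in W_H$; and $w\cdot t=t$ precisely when $n\,t\,n^{-1}=t$. Hence $W_H(t)=\Nor_{C_H(t)}(T)/T$, which is the first assertion of the lemma.

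Next I would set up the input to Lemma~\ref{lem:disconnected}. Because $t$ lies in the maximal torus $T$, it is a semisimple element of the connected reductive group $H$, and therefore $M:=C_H(t)$ is a (possibly disconnected) reductive algebraic group with identity component $M^0=C_H^0(t)=\fG(t)$. As $T$ is a maximal torus of $H$ contained in $M$, it is also a maximal torus of $M$ and of $M^0$; choose a Borel subgroup $B$ of $\fG(t)$ with $T\subseteq B$. Lemma~\ref{lem:disconnected}(1) then gives
\[
W_H(t)=\Nor_M(T)/T=W_{M^0}(T)\rtimes\bigl(\Nor_M(T,B)/T\bigr)=W_{\fG(t)}\rtimes\bigl(\Nor_M(T,B)/T\bigr),
\]
while Lemma~\ref{lem:disconnected}(2) identifies $\Nor_M(T,B)/T\simeq M/M^0=\pi_0(C_H(t))=A_H(t)$. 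Combining these yields $W_H(t)=W_{\fG(t)}\rtimes A_H(t)$.

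For the last statement I would invoke Steinberg's theorem on centralisers of semisimple elements: when the derived group of $H$ is simply connected, $C_H(t)$ is connected, so $A_H(t)=\pi_0(C_H(t))$ is trivial, $\fG(t)=C_H^0(t)=C_H(t)$, and the semidirect product degenerates to $W_H(t)=W_{\fG(t)}=W_{C_H(t)}(T)$. The only genuinely non-formal ingredients are the reductivity of $C_H(t)$ (so that Lemma~\ref{lem:disconnected} applies) and this appeal to Steinberg's connectedness theorem; everything else is coset bookkeeping inside $\Nor_H(T)/T$, so I do not anticipate a serious obstacle here.
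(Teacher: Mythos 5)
Your proposal is correct and follows essentially the same route as the paper: identify $W_H(t)$ with $\Nor_{C_H(t)}(T)/T$, feed $M=C_H(t)$ into Lemma~\ref{lem:disconnected}, and invoke Steinberg's connectedness theorem for the simply-connected case. If anything you are slightly more careful than the paper in spelling out why the passage from $\{w\in W_H: w\cdot t=t\}$ to $\Nor_{C_H(t)}(T)/T$ is legitimate and in noting that $C_H(t)$ is reductive so that Lemma~\ref{lem:disconnected} actually applies.
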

\begin{proof} Let $t \in T$. Note that 
 \begin{align*}
 W_H(t) &  = \{w \in W_H : w\cdot t = t\}\\
 & = \{w \in W_H : wtw^{-1} = t\}\\
 & = \{w \in W_H : wt = tw\}\\
 & = W \cap C_H(t).
\end{align*}
Note that $H$ and $C_H(t)$ have a common maximal torus $T$. Now
\begin{align*}
W_H \cap C_H(t)  & = \Nor_H(T)/T \cap C_H(t)\\
& = \Nor_{C_H(t)}(T)/T\\
& = W_{C_H(t)}(T).
\end{align*}
The result follows by applying Lemma~\ref{lem:disconnected} with $M=C_H(t)$.
 
If $H$ has simply-connected derived group, then the
 centralizer $C_H(t)$ is connected by Steinberg's theorem
\cite[\S 8.8.7]{CG}. 
\end{proof}

Let $\tau$ be an irreducible representation of $W_{\fG(t)}$. 
Now we apply the Springer correspondence
to  $\tau$. Note: the Springer correspondence that we are considering
here coincides with that constructed by Springer for a reductive group
over a field of positive characteristic and is obtained
from the correspondence constructed by Lusztig by tensoring the latter by
the sign representation  of $W_{\fG(t)}$ (see \cite{Hot}).

Let $\mathfrak{c}(t)$ denote the Lie algebra of $\fG(t)$, for $x\in\fc(t)$, let $Z_{\fG(t)}(x)$
denote the centralizer of $x$ in $\fG(t)$, via the adjoint representation of 
$\fG(t)$ on $\mathfrak{c}(t)$, and let 
\begin{align}
A_x = \pi_0 (Z_{\fG(t)}(x))
\end{align}
Let $\bB_x$ denote the variety of Borel
subalgebras of $\mathfrak{c}(t)$ that contain $x$.

All the irreducible components of $\bB_x$ have the same dimension $d(x)$
over $\Rset$, see \cite[Corollary 3.3.24]{CG}.   
The finite group $A_x$  acts on the  set
of irreducible components of $\bB_x$ \cite[p. 161]{CG}.

\begin{defn} If a group $A$ acts on the variety $\mathbf{X}$, let
$\cR(A,\mathbf{X})$ denote the  set of irreducible representations of $A$
appearing
in the homology $H_*(\mathbf{X})$, as in \cite[p.118]{R}.  Let
$\cR_{top}(A, \mathbf{X})$ denote the set of irreducible
representations of $A$ appearing in the top homology of $\mathbf{X}$.
\end{defn}

The Springer correspondence yields a one-to-one correspondence
\begin{equation} \label{eqn:Springercor}
(x,\varrho)\mapsto \tau(x,\varrho)\end{equation}
between the set of $\fG(t)$-conjugacy classes of pairs $(x,\varrho)$ formed by a
nilpotent element $x \in \mathfrak{c}(t)$ and an irreducible representation 
$\varrho$ of $A=A_x$ which occurs in $H_{d(x)}(\bB_x, \Cset)$ (that is,
$\varrho\in\cR_{top}(A_x,\bB_x)$) and the set of isomorphism classes of irreducible  
representations of the Weyl group $W_{\fG(t)}$.

\smallskip

We now work with the Jacobson-Morozov theorem \cite[p. 183]{CG}.  Let
$e_0$ be the standard nilpotent matrix in $\mathfrak{sl}(2,\Cset)$:
\[e_0 = \left(
\begin{array}{cc}
0 & 1 \\
0 & 0 \end{array}\right) \]
There exists a rational homomorphism $\gamma : \SL(2, \Cset) \to \fC(t)$
such that its differential $\mathfrak{sl}(2,\Cset) \to \mathfrak{c}(t)$
sends $e_0$ to $x$, see \cite[\S 3.7.4]{CG}.

Define
\begin{eqnarray} \label{eqn:Phi}
\Phi \colon \cW_F^{\ab}\times \SL(2,\Cset) \to G, \quad\quad  (w,\Frob,Y) 
\mapsto \hlambda(w)\cdot t \cdot \gamma(Y)
\end{eqnarray}
\begin{eqnarray} \label{eqn:Upsilon}
\Upsilon \colon \cW_F^{\ab}\times \SL(2,\Cset) \to H, \quad\quad  (w,\Frob,Y) 
\mapsto \hlambda(w)\cdot t \cdot \gamma(Y)
\end{eqnarray}
\begin{eqnarray} \label{eqn:Psi}
\Psi \colon \cW_F^{\ab} \times \SL(2,\Cset) \to \fG(t), \quad\quad 
(w,\Frob,Y) \mapsto \hlambda(w)\cdot t \cdot \gamma(Y)
\end{eqnarray}
\begin{eqnarray} \label{eqn:Xi}
\Xi \colon  \cW_F^{\ab} \times \SL(2,\Cset) \to \fG(t), \quad \quad (w,\Frob,Y) 
\mapsto \hlambda(w)\cdot \gamma(Y).
\end{eqnarray}
where $w$ is any element in $I_F^{\ab}$.

Note that $im\,\Phi\subset H$ (see \cite[\S~4.2]{R}) and that
$C(im \, \Psi) = C(im \, \Upsilon)$, for any element in $C(im \, \Upsilon)$
must commute with $\Upsilon(\Frob) = t$. We also have $C(im \, \Xi) = C(im \,
\Psi) \subset \fC(t)$. Let
\[
A_{\Psi} = \pi_0(C(im \, \Psi)),
\quad \quad A_{\Xi} = \pi_0(C(im \, \Xi)).
\]

\begin{lem} \label{lem:AAA}
We have
\[
A_x = A_{\Xi} = A_{\Psi}.
\]
\end{lem}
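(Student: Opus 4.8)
The plan is to unwind the three groups $A_x$, $A_\Xi$, $A_\Psi$ to a common concrete description and then identify them. Recall that $x = d\gamma(e_0)$ is the nilpotent image of the standard nilpotent $e_0\in\mathfrak{sl}(2,\Cset)$ under the Jacobson--Morozov homomorphism $\gamma\colon\SL(2,\Cset)\to\fG(t)$, and that $A_x = \pi_0(Z_{\fG(t)}(x))$, the component group of the centralizer of $x$ in $\fG(t)$ (acting via the adjoint representation on $\fc(t)$). First I would observe that $\im\Psi = \hlambda(I_F^{\ab})\cdot t\cdot\gamma(\SL(2,\Cset))$ lies in $\fC(t)$; since $t$ is central in $\fC(t) = C_H(t)$ by definition, and $\hlambda(I_F^{\ab})\subset Z(H)\subset\fC(t)$ is central in $H$ (so also central in $\fG(t)$), the centralizer $C(\im\Psi)$ inside the ambient group equals $C_{\fG(t)}(\gamma(\SL(2,\Cset)))$ — the extra factors contribute nothing. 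The same argument applied to $\Xi$ (whose image is $\hlambda(I_F^{\ab})\cdot\gamma(\SL(2,\Cset))$) gives $C(\im\Xi) = C_{\fG(t)}(\gamma(\SL(2,\Cset)))$ as well, which already yields $A_\Xi = A_\Psi$ on the nose, not merely up to component group.

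The remaining, and only substantive, step is $A_x = A_\Psi$, i.e.
\[
\pi_0\bigl(Z_{\fG(t)}(x)\bigr) \;\simeq\; \pi_0\bigl(C_{\fG(t)}(\gamma(\SL(2,\Cset)))\bigr).
\]
Here I would invoke the standard fact from $\mathfrak{sl}_2$-theory over $\Cset$ (Kostant, as in \cite[\S 3.7]{CG}): the centralizer of the full $\mathfrak{sl}_2$-triple $(x,h,y)$ attached to $\gamma$ is a \emph{deformation retract} of the centralizer $Z_{\fG(t)}(x)$ of the nilpotent alone. More precisely, $Z_{\fG(t)}(x) = Z_{\fG(t)}(x,h,y)\ltimes U$ where $U$ is a connected unipotent group (the intersection with the radical of the $\mathfrak{sl}_2$-parabolic), and $Z_{\fG(t)}(x,h,y) = C_{\fG(t)}(\gamma(\SL(2,\Cset)))$ because a homomorphism $\SL(2,\Cset)\to\fG(t)$ is determined on Lie algebras by the triple. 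Consequently $\pi_0(Z_{\fG(t)}(x)) = \pi_0(Z_{\fG(t)}(x,h,y)) = \pi_0(C_{\fG(t)}(\gamma(\SL(2,\Cset)))) = A_\Psi$, as desired.

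The main obstacle I anticipate is purely one of bookkeeping rather than depth: making sure that the centralizers are all taken in the \emph{same} group $\fG(t)$, and that passing from $\SL(2,\Cset)$ to its Lie algebra $\mathfrak{sl}(2,\Cset)$ loses no component-group information — this is where connectedness and simple-connectedness of $\SL(2,\Cset)$ is used, guaranteeing that $C_{\fG(t)}(\gamma(\SL(2,\Cset))) = C_{\fG(t)}(d\gamma(\mathfrak{sl}(2,\Cset)))$ exactly. One should also double-check that the factors $\hlambda(I_F^{\ab})$ and $t$ genuinely centralize everything in sight, which follows since $\hlambda$ takes values in $Z(H) = Z(G_{\hlambda})$ and $t$ is by construction central in $C_H(t)\supset\fG(t)$. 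Once these points are confirmed, the chain of identifications closes and gives $A_x = A_\Xi = A_\Psi$.
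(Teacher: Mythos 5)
Your proof is correct and follows essentially the same route as the paper: the key step in both is the Kostant--Chriss--Ginzburg decomposition $Z_{\fG(t)}(x) = Z_{\fG(t)}(\gamma)\cdot U$ (the paper cites \cite[\S 3.7.23]{CG}) together with contractibility of the unipotent radical $U$, which yields $A_x = A_\Xi$, while the equality $A_\Xi = A_\Psi$ is observed to hold on the level of centralizers because $\hlambda(I_F^{\ab})$ and $t$ are centralized by all of $\fG(t)$. Your write-up is somewhat more explicit about the latter point, which the paper merely asserts in the paragraph preceding the lemma.
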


\begin{proof}   According to \cite[\S  3.7.23]{CG},  we have
\[
Z_{\fG(t)}(x)  = C(im \, \Xi)\cdot U
\] 
with $U$ the unipotent radical of $Z_{\fG(t)}(x)$. Now $U$ is contractible
via the map
\[
[0,1] \times U \to U, \quad \quad (\lambda, \exp Y) \mapsto \exp( \lambda
Y)
\]
for all $Y \in \fn$ with $\exp \fn = U$.
\end{proof}

Lemma~\ref{lem:AAA} allows us to define
\[
A:  = A_x = A_{\Psi}= A_{\Xi}.
\]

\medskip

Let $\cC(t)$ denote a {\it predual} of $\fG(t)$, \ie
$\fG(t)$ is the Langlands dual of $\cC(t)$. 
Let $\bB^{\Psi}$ (resp. $\bB^{\Xi}$) denote the variety of the Borel
subgroups of $\fG(t)$
which contain $S_{\Psi}: = \Psi(\cW_F\times B_2)$ (resp. $S_{\Xi}: =
\Xi(\cW_F \times B_2) = \gamma(B_2)$).

\begin{lem} \label{lem:bije}
We have
\[
\cR_{top}(A, \bB_x) = \cR(A, \bB^{\Xi}).
\]
\end{lem}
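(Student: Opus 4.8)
The plan is to identify both sides of the claimed equality as sets of irreducible $A$-representations attached to geometric data for $\fG(t)$, and to show these data agree. First I would recall that $x\in\fc(t)$ is the nilpotent image, under $\d\gamma$, of the standard nilpotent $e_0\in\mathfrak{sl}(2,\Cset)$, and that $\gamma(B_2)$ is the Borel subgroup of $\fG(t)$ generated by a maximal torus through $\gamma(T_x)$-part and the one-parameter unipotent through $\exp x$. The key observation is that a Borel \emph{subalgebra} $\fb\subset\fc(t)$ contains $x$ if and only if the corresponding Borel \emph{subgroup} $B\subset\fG(t)$ contains $\exp x$, and moreover — since $\Xi$ sends $\cW_F$ into $\hlambda(I_F^{\ab})$ which lands in the centre of $\fG(t)$ (as $\fG(t)=C_H^0(t)$ and $t$, being in $T=C_H(\hlambda(I_F^\ab))$-compatible, centralizes $\hat\lambda(I_F^\ab)$) — the group $S_{\Xi}=\gamma(B_2)$ is contained in a Borel subgroup $B$ exactly when $\exp x\in B$ together with the automatic containment of the central part. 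Hence there is a natural $A$-equivariant isomorphism of varieties
\[
\bB^{\Xi}\;\simeq\;\bB_x,
\]
where on the left $A=A_\Xi$ acts through $C(\im\Xi)$ on the Borel subgroups containing $S_\Xi$, and on the right $A=A_x=\pi_0(Z_{\fG(t)}(x))$ acts on $\bB_x$; these actions are intertwined by the isomorphism because $C(\im\Xi)$ and $Z_{\fG(t)}(x)$ have the same component group by Lemma~\ref{lem:AAA} (indeed $Z_{\fG(t)}(x)=C(\im\Xi)\cdot U$ with $U$ the contractible unipotent radical, as recorded in its proof).

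Next I would compare the two notions of ``which $A$-representations appear''. The left side $\cR(A,\bB^\Xi)$ is the set of irreducible $A$-representations occurring in $H_*(\bB^\Xi,\Cset)$, i.e.\ in \emph{all} homological degrees; the right side $\cR_{top}(A,\bB_x)$ uses only the top homology $H_{d(x)}(\bB_x,\Cset)$. Here I would invoke the fundamental property of Springer fibres: all irreducible components of $\bB_x$ have the same dimension $d(x)$ (cited above from \cite[Cor.\ 3.3.24]{CG}), so $H_{d(x)}(\bB_x,\Cset)$ is the permutation module on the set of irreducible components, and the representations occurring there are exactly those occurring anywhere in $H_*(\bB_x,\Cset)$ — this is a standard fact about the $A$-action on a Springer fibre (the $A$-module structure on the whole homology is determined by the component module, cf.\ the discussion in \cite[p.~161]{CG}). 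Using the isomorphism $\bB^\Xi\simeq\bB_x$ from the first paragraph, $\cR(A,\bB^\Xi)=\cR(A,\bB_x)=\cR_{top}(A,\bB_x)$, which is the claim.

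In more detail on the main step, I would set up the bijection explicitly: a Borel subgroup $B$ of $\fG(t)$ contains $S_\Xi=\gamma(B_2)$ iff $\Lie B$ is a Borel subalgebra of $\fc(t)$ containing $\d\gamma(\mathfrak{b}_2)$, and since $\d\gamma(\mathfrak{b}_2)$ is spanned by $x=\d\gamma(e_0)$ together with the semisimple element $h=\d\gamma\!\begin{pmatrix}1&0\\0&-1\end{pmatrix}$, containing this Borel subalgebra is equivalent to containing $x$ (a Borel subalgebra containing the nilpotent $x$ automatically contains a compatible $\mathfrak{sl}_2$-triple's Borel via Jacobson--Morozov theory, \cite[\S 3.7.4]{CG}). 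This gives the bijection $\bB_x\to\bB^\Xi$, $\fb\mapsto$ the Borel subgroup with that Lie algebra, which is clearly a morphism of varieties with inverse given by taking Lie algebras. Equivariance under $A$ follows because both $C(\im\Xi)$ and $Z_{\fG(t)}(x)=C(\im\Xi)\cdot U$ act by conjugation and induce the same $A=\pi_0$ on either description.

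The main obstacle I anticipate is being careful that the two group actions really are intertwined \emph{on components}, not merely that the two varieties are abstractly homeomorphic: one must use that $U$ acts trivially on the set of irreducible components of $\bB_x$ (it is connected, hence fixes each component), so the $Z_{\fG(t)}(x)$-action on components factors through $A_x$ and matches the $C(\im\Xi)$-action via Lemma~\ref{lem:AAA}. The secondary point requiring care is the passage from ``occurs in top homology'' to ``occurs in all homology'' for $\bB_x$; I would handle this by citing the equidimensionality of $\bB_x$ together with the known description of the $A_x$-module $H_*(\bB_x,\Cset)$ in terms of the component permutation module (\cite{CG}, Chapter~3), so that $\cR_{top}(A,\bB_x)=\cR(A,\bB_x)$.
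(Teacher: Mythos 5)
Your proposed isomorphism $\bB^{\Xi}\simeq\bB_x$ is the crux of your argument, and it is false in general. The variety $\bB^{\Xi}$ consists of Borel subgroups of $\fG(t)$ containing $S_{\Xi}=\gamma(B_2)$, which requires containing both the one-dimensional torus $\gamma(\mathrm{diag})$ and the unipotent $\exp(\Cset x)$. Hence
\[
\bB^{\Xi}\;=\;(\bB_x)^{\gamma(\mathrm{diag})},
\]
the fixed-point locus of the torus action on the Springer fibre, which is generically a \emph{proper} subvariety. Your bridging claim — that a Borel subalgebra containing the nilpotent $x$ automatically contains the Borel of the associated Jacobson--Morozov triple — is not correct. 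Jacobson--Morozov guarantees the existence of \emph{some} triple through $x$, but a given $\fb\ni x$ need not contain the specific $h=\d\gamma\begin{pmatrix}1&0\\0&-1\end{pmatrix}$. A concrete counterexample: in $\mathfrak{sl}_3$ take $x=E_{12}$, $h=\mathrm{diag}(1,-1,0)$; the Borel subalgebra stabilizing the flag $\langle e_1+e_3\rangle\subset\langle e_1,e_3\rangle$ contains $x$ but not $h$, so it lies in $\bB_x$ yet not in $\bB^{\Xi}$.

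Because the two varieties are not isomorphic, the identification of $\cR$-sets via a variety isomorphism collapses, and the intended argument cannot be repaired along these lines without importing substantial additional machinery. The actual content of the lemma is precisely the nontrivial comparison between the $A$-module $H_{d(x)}(\bB_x,\Cset)$ (top homology of the nilpotent Springer fibre) and the $A$-module $H_*(\bB^{\Xi},\Cset)$ (full homology of the torus-fixed locus). The paper handles this by invoking the established bijection between Springer parameters $(x,\varrho)$ and Kazhdan--Lusztig/Reeder parameters $(\Xi,\varrho)$, due to Lusztig (\cite{Lu}) and Barbasch--Moy (\cite{BM}): the same $\varrho$ occurs in both homologies, as a feature of that bijection, not as a consequence of a variety isomorphism. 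Your secondary claim $\cR_{\mathrm{top}}(A,\bB_x)=\cR(A,\bB_x)$ also deserves a citation if true, but the first gap is already fatal. You should either cite the Lusztig/Barbasch--Moy bijection as the paper does, or, if you want a self-contained geometric argument, work with the Bialynicki--Birula/hyperbolic localization comparison between $H_*(\bB_x)$ and $H_*((\bB_x)^{\gamma(\mathrm{diag})})$ — that is the machinery that genuinely relates the two sides.
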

\begin{proof}   Let, as before,  $\tau$ be an irreducible representation
of $W_{\fG(t)}$.   Let $(x,\varrho)$ be the Springer parameter attached to
$\tau$ by the inverse bijection of (\ref{eqn:Springercor}).
Define $\Xi$ as in Eqn.\ref{eqn:Xi}.  Note that $\Xi$ depends on the
morphism $\gamma$, which in turn depends on the nilpotent element $x \in
\mathfrak{c}(t)$.

Then $\Xi$ is a real tempered $L$-parameter for the $p$-adic group
$\cC(t)$, see \cite[3.18]{BM}.  According to several sources, see 
\cite[\S 10.13]{Lu}, \cite{BM}, there is a bijection between
 Springer parameters and Reeder parameters:
\begin{equation}  \label{eqnarray:bij}
(d \gamma(e_0)),\varrho) \mapsto (\Xi, \varrho).
\end{equation}
Now $\varrho$ is an irreducible representation of $A$ which appears
simultaneously in $H_{d(x)}(\bB_x, \Cset)$ and $H_*(\bB^{\Xi}, \Cset)$.
\end{proof}

\medskip

We will recall below a result of Ram and Ramagge, which is based on
Clifford theoretic results developed by MacDonald and Green.

Let $\cH$ be a finite dimensional $\Cset$-algebra and let $\cA$ be a finite group
acting by automorphisms on $\cH$. If $V$ is a finite dimensional module
for
$\cH$ and $a\in \cA$, let ${}^aV$ denote the $\cH$-module with the
action $f\cdot v:=a^{-1}(f)v$, $f\in\cH$ and $v\in V$. Then $V$ is
simple if and only if ${}^aV$ is. Let $V$ be a simple
$\cH$-module. Define the inertia subgroup of $V$ to be
\[\cA_V:=\left\{a\in \cA\;:\;V\simeq {}^aV\right\}.\]
Let $a\in \cA_V$. Since both $V$ and ${}^a V$ are simple, Schur's lemma 
implies that the isomorphism $V\to{}^aV$ is unique up to a scalar multiple. 
For each $a\in\cA_V$ we fix an isomorphism
\[\pphi_a\colon V\to{}^{a^{-1}}V.\]
Then, as operators on $V$,
\[\pphi_av=a(r)\pphi_a,\quad \text{and} \quad
\pphi_a\pphi_{a'}=\eta_V(a,a')^{-1}\pphi_{aa'},\]
where $\eta_V(a,a')\in\Cset^\times$. The resulting function
\[\eta_V\colon \cA_V\times \cA_V\to \Cset^\times,\]
is a cocycle. The isomorphism class of $\eta_V$ is
independent of the choice of the isomorphism $\pphi_a$.

Let $\Cset[\cA_V]_{\eta_V}$ be the algebra with basis $\left\{c_a\,:\,a\in
\cA_V\right\}$ and multiplication given by
\[c_a\cdot c_{a'}=\eta_V(a,a')c_{aa'},\quad\text{for $a,a'\in\cA_V$.}\]

Let $\psi$ be a simple $\Cset[\cA_V]_{\eta_V}$-module. Then putting
\[(fa)\cdot(v\otimes z)=f\pphi_av\otimes c_az,\quad\text{for $f\in\cH$,
$a\in \cA_V$, $v\in V$, $z\in\psi$,}\]
defines an action of $\cH\rtimes \cA_V$ on $V\otimes\psi$.
Define the induced module
\[V\rtimes\psi:=\Ind_{\cH\rtimes \cA_V}^{\cH\rtimes \cA}(V\otimes\psi).\]
\begin{thm} \label{thm:RaRa}
{\rm (Ram-Ramagge, \cite[Theorem~A.6]{RamRam}, Reeder, \cite[(1.5.1)]{R})}
The induced module $V\rtimes\psi$ is a simple $\cH\rtimes \cA$-module,
every simple $\cH\rtimes \cA$-module occurs in this way, and if
$V\rtimes\psi\simeq V'\rtimes\psi'$, then $V$, $V'$ are $\cA$-conjugate, and
$\psi\simeq\psi'$ as $\Cset[\cA_V]_{\eta_V}$-modules.
\end{thm}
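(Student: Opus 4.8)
The plan is to run the standard Clifford-theory argument for the crossed product $\cH\rtimes\cA$ over $\Cset$; the only structural subtlety is that $\cH$ is an arbitrary finite-dimensional algebra rather than a group algebra, so the semisimplicity of restrictions to $\cH$ that the argument needs must be produced by hand. I would organise everything around one simple $\cH\rtimes\cA$-module $M$, its restriction to $\cH$, and the isotypic component of $\Res_{\cH}M$ attached to a fixed simple $\cH$-constituent $V$.

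First I would show that $\Res_\cH M$ is semisimple. As $M$ is finite-dimensional and nonzero it contains a simple $\cH$-submodule $V$, and for each $a\in\cA$ the subspace $aV$ is again an $\cH$-submodule, isomorphic to ${}^{a}V$, since $f\cdot(av)=a\cdot(a^{-1}(f)v)$ for $f\in\cH$. Hence $\sum_{a\in\cA}aV$ is an $\cH\rtimes\cA$-submodule, so it equals $M$, and $M|_\cH$ is a sum of simple submodules. I would then split $M|_\cH$ into isotypic components, note that $\cA$ permutes them transitively and that the stabilizer of the $V$-component $M_V$ is precisely $\cA_V$, so $M_V$ is an $\cH\rtimes\cA_V$-submodule. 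The Clifford correspondence is then formal: the canonical map $\cH\rtimes\cA\otimes_{\cH\rtimes\cA_V}M_V\to M$ has nonzero image, hence is onto, and a count over coset representatives of $\cA_V$ in $\cA$ makes it injective, so $M\simeq\Ind_{\cH\rtimes\cA_V}^{\cH\rtimes\cA}M_V$; and if $M_V$ had a proper nonzero $\cH\rtimes\cA_V$-submodule $N$ then $\Ind N$ would be a proper nonzero submodule of $M$, contradicting simplicity, so $M_V$ is simple over $\cH\rtimes\cA_V$.

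Next I would identify $M_V$ with $V\otimes\psi$. Since $\End_\cH(V)=\Cset$, evaluation gives an $\cH$-module isomorphism $V\otimes_\Cset\psi\simeq M_V$ with $\psi:=\Hom_\cH(V,M_V)$. Transporting the $\cA_V$-action on $M_V$ through the fixed intertwiners $\pphi_a\colon V\to{}^{a^{-1}}V$ equips $\psi$ with operators $c_a$ obeying $c_ac_{a'}=\eta_V(a,a')\,c_{aa'}$, that is, a $\Cset[\cA_V]_{\eta_V}$-module structure, under which $M_V\simeq V\otimes\psi$ and $M\simeq V\rtimes\psi$ as in the statement; and since the $\cH\rtimes\cA_V$-submodules of $V\otimes\psi$ are exactly the subspaces $V\otimes\psi'$ for $\psi'$ a $\Cset[\cA_V]_{\eta_V}$-submodule, simplicity of $M_V$ forces $\psi$ simple. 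This gives one direction: every simple $\cH\rtimes\cA$-module has the asserted form. For the converse I would fix $\psi$ simple over $\Cset[\cA_V]_{\eta_V}$ and take an arbitrary simple subquotient $M$ of $V\rtimes\psi$: Frobenius reciprocity gives $\Hom_{\cH\rtimes\cA_V}(V\otimes\psi,\Res M)\neq0$, which, because $V\otimes\psi$ is $V$-isotypic over $\cH$ and $\Hom_\cH(V,{}^{a}V)=0$ for $a\notin\cA_V$, forces $M\simeq V\rtimes\psi''$ with $\Hom_{\Cset[\cA_V]_{\eta_V}}(\psi,\psi'')\neq0$; as both are simple, $\psi''\simeq\psi$ and $M\simeq V\rtimes\psi$. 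Every simple subquotient of $V\rtimes\psi$ is thus isomorphic to $V\rtimes\psi$, and a dimension count makes $V\rtimes\psi$ simple. Uniqueness is the same kind of argument: from $V\rtimes\psi\simeq V'\rtimes\psi'$, restricting to $\cH$ and matching isotypic components makes $V'$ an $\cA$-conjugate of $V$, after which comparing the $V$-isotypic components and applying $\Hom_\cH(V,-)$ yields $\psi\simeq\psi'$.

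The one step I expect to be more than bookkeeping is producing the $\Cset[\cA_V]_{\eta_V}$-module structure on $\psi=\Hom_\cH(V,M_V)$ and checking that the $2$-cocycle it produces is $\eta_V$ itself — not merely a cohomologous, or the inverse, cocycle — relative to the conventions fixed by the $\pphi_a$; once that is settled, together with the two inputs that $M|_\cH$ is semisimple and that $\Hom_\cH(V,{}^{a}V)=0$ for $a\notin\cA_V$, the remaining assertions are formal.
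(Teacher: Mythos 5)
The paper does not prove Theorem~\ref{thm:RaRa}; it cites Ram--Ramagge \cite[Theorem~A.6]{RamRam} and Reeder \cite[(1.5.1)]{R} for it, so there is no internal proof to compare against. Your argument is the standard Clifford-theoretic derivation, and it is essentially the one given in those references: establish semisimplicity of $\Res_\cH M$ by showing $M=\sum_{a\in\cA}aV$ with each $aV\simeq{}^aV$ simple, identify the $V$-isotypic component $M_V$ as an $\cH\rtimes\cA_V$-module of the form $V\otimes\psi$ via $\psi=\Hom_\cH(V,M_V)$ with the $\eta_V$-twisted $\cA_V$-action, and then transfer by induction using $\Hom_\cH(V,{}^aV)=0$ for $a\notin\cA_V$. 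You correctly flag the two non-automatic points: semisimplicity of the restriction (needed because $\cH$ is not assumed semisimple) and the verification that the $2$-cocycle on $\psi$ is $\eta_V$ on the nose with the conventions fixed by the $\pphi_a$.

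One small slip to repair in the converse direction: Frobenius reciprocity
$\Hom_{\cH\rtimes\cA}(V\rtimes\psi,M)\simeq\Hom_{\cH\rtimes\cA_V}(V\otimes\psi,\Res M)$
identifies simple \emph{quotients} of $V\rtimes\psi$, not arbitrary subquotients, so the phrase ``arbitrary simple subquotient'' should be ``simple quotient'' (e.g.\ the cosocle); once $M$ is a simple quotient you get $M\simeq V\rtimes\psi$ and the dimension comparison shows the surjection $V\rtimes\psi\twoheadrightarrow M$ is an isomorphism, giving simplicity. Alternatively, one can avoid Frobenius reciprocity here altogether: any nonzero $\cH\rtimes\cA$-submodule $N$ of $V\rtimes\psi$ meets the $V$-isotypic component $V\otimes\psi$ nontrivially (because $N$ is $\cA$-stable and $\Res(V\rtimes\psi)$ is a direct sum of the ${}^aV$-isotypic pieces), and since $V\otimes\psi$ is simple over $\cH\rtimes\cA_V$ it lies in $N$, whence $N$ is everything. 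Either repair closes the gap; the rest of your argument is correct.
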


One the other hand, it follows from Lemma~\ref{lem:centrals} that the 
isotropy group of $t$ in $W_H$ admits the following semidirect product 
decomposition:
\[W_H(t)=W_{\fG(t)}\rtimes A_H(t)\quad\text{ with
$A_H(t):=\pi_0(C_H(t))$.}\]
Hence the group algebra $\Cset[W_H(t)]$ is a crossed-product algebra 
\[\Cset[W_H(t)]=\Cset[W_{\fG(t)}]\rtimes A_H(t).\]
By applying Theorem~\ref{thm:RaRa} with $\cH=\Cset[W_{\fG(t)}]$ and $\cA=
A_H(t)$, we see that the irreducible representations of $W_H(t)$ are the
\[\tau(x,\varrho)\rtimes\psi,\]
with $\psi$ any simple $\Cset[A_{\tau}]_{\eta_{\tau}}$-module and 
$\tau=\tau(x,\varrho)$.

Let $\cI$ be a standard
Iwahori subgroup in $\cC(t)$, and let $\cH(\cC(t),\cI)$ denote the
corresponding Iwahori-Hecke algebra. Recall that $x=d \gamma(e_0)$.
We will denote by $V=V(x,\varrho)$ the real tempered simple module of
$\cH(\cC(t),\cI)$ which  corresponds to $(x,\varrho)$. Here ``real'' means
that the central character of $V$ is real.

By applying Theorem~\ref{thm:RaRa} with $\cH=\cH(\cC(t),\cI)$ and 
$\cA=A_H(t)$, we obtain the following subset of simple modules for
$\cH(\cC(t),\cI)\rtimes A_H(t)$:
\[V(x,\varrho)\rtimes\psi,\]
with $\psi$ any simple 
$\Cset[A_{V}]_{\eta_V}$-module and $V=V(x,\varrho)$.

\begin{lem} \label{lem:cocycles}
We have \[A_{\tau(x,\varrho)}=A_{V(x,\varrho)}.\]
Moreover, the cocycles $\eta_{\tau(x,\varrho)}$ and $\eta_{V(x,\varrho)}$ can be 
chosen to be equal. 
\end{lem}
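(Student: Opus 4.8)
The plan is to recognise both $\tau(x,\varrho)$ and $V(x,\varrho)$ as built from one and the same $A_x$-equivariant geometric object, the Springer fibre $\bB_x$ together with its action of $A_x=\pi_0(Z_{\fG(t)}(x))$ (cf.\ Lemmas~\ref{lem:AAA} and~\ref{lem:bije}). In the normalisation fixed above, $\tau(x,\varrho)$ is the $\varrho$-isotypic multiplicity space $\Hom_{A_x}(\varrho,H_{d(x)}(\bB_x,\Cset))$ of the Springer action, while, by the geometric realisation of the Kazhdan--Lusztig classification in \cite{CG}, $V(x,\varrho)$ is the analogous $\varrho$-isotypic piece of an equivariant $K$-theory group attached to the same Springer geometry. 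The group $A_H(t)=\pi_0(C_H(t))$ acts throughout by the outer automorphisms induced by conjugation in $C_H(t)$ --- on $W_{\fG(t)}$, on $\cH(\cC(t),\cI)$, and, after a choice of lift, geometrically on $\bB_x$ --- and since both the Springer correspondence and the Kazhdan--Lusztig parametrisation are canonical, each is equivariant for this action.

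First I would prove $A_{\tau(x,\varrho)}=A_{V(x,\varrho)}$. Equivariance of \eqref{eqn:Springercor} gives $a\cdot\tau(x,\varrho)=\tau(a\cdot x,a\cdot\varrho)$ for $a\in A_H(t)$, where $a\cdot x$ and $a\cdot\varrho$ are obtained by transporting through any lift $n_a\in C_H(t)$ of $a$; hence $a\in A_{\tau(x,\varrho)}$ if and only if $n_a$ preserves the $\fG(t)$-conjugacy class of the pair $(x,\varrho)$. The identical statement holds for $V(x,\varrho)$, because the classification of the real tempered simple modules of $\cH(\cC(t),\cI)$ by the pairs $(x,\varrho)$ is likewise canonical, hence equivariant under automorphisms of $\cC(t)$, equivalently of $\fG(t)$. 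Thus both inertia groups equal $\{a\in A_H(t):\ n_a\ \text{fixes the $\fG(t)$-orbit of}\ (x,\varrho)\}$.

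For the cocycles I would, for each $a$ in this common inertia group, choose the lift $n_a\in C_H(t)$ so that in addition $\Ad(n_a)x=x$ --- a general lift sends $x$ to $g\cdot x$ with $g\in\fG(t)$ by the previous paragraph, so replace $n_a$ by $g^{-1}n_a$, which still normalises $\fG(t)=C_H^0(t)$ and now fixes $x$ --- and also fix once and for all an isomorphism $\psi_a\colon\varrho\to{}^{n_a^{-1}}\varrho$ of $A_x$-representations. Such an $n_a$ normalises $Z_{\fG(t)}(x)$, acts algebraically on $\bB_x$, hence acts canonically on $H_{d(x)}(\bB_x,\Cset)$ and on the equivariant $K$-group carrying $V(x,\varrho)$; feeding these canonical actions together with the fixed $\psi_a$ through the isotypic-component recipe produces \emph{simultaneous} intertwiners $\pphi_a^{\,\tau}\colon\tau(x,\varrho)\to{}^{a^{-1}}\tau(x,\varrho)$ and $\pphi_a^{\,V}\colon V(x,\varrho)\to{}^{a^{-1}}V(x,\varrho)$. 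Since $n_an_{a'}$ and $n_{aa'}$ both represent $aa'$ and both fix $x$, the element $z(a,a'):=n_an_{a'}n_{aa'}^{-1}$ lies in $Z_{\fG(t)}(x)$ and acts on $H_*(\bB_x)$ and on the $K$-group through its class in $A_x$; the computation recalled before Theorem~\ref{thm:RaRa} then shows that $\eta_{\tau(x,\varrho)}(a,a')^{-1}$ and $\eta_{V(x,\varrho)}(a,a')^{-1}$ both equal the single scalar by which $\psi_a\psi_{a'}$ differs from $\psi_{aa'}$, namely the canonical cocycle of the simple $\Cset[A_x]$-module $\varrho$ under the outer action. Hence $\eta_{\tau(x,\varrho)}$ and $\eta_{V(x,\varrho)}$ agree for these choices, as required.

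The step I expect to be the main obstacle is the one just used: that the same lift $n_a$ induces \emph{compatible} automorphisms of the homological realisation of $\tau(x,\varrho)$ and of the $K$-theoretic realisation of $V(x,\varrho)$, with the twisting in each governed only by $\varrho$. This amounts to the functoriality for automorphisms of the geometric model of the Kazhdan--Lusztig parametrisation in \cite{CG} being compatible with Springer's functoriality --- equivalently, that the bivariant Chern character on the relevant Springer fibre intertwines the two $A_H(t)$-equivariant structures. Granting this, the remaining verifications (independence of the cocycle class from the choices of $n_a$ and $\psi_a$, and that changing $n_a$ within its $Z_{\fG(t)}(x)$-coset alters $\pphi_a$ only by a scalar) are routine, being exactly what is built into the formalism preceding Theorem~\ref{thm:RaRa}.
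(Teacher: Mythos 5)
Your route is genuinely different from the paper's, and the difference is where the difficulty lies. The paper never realises $\tau(x,\varrho)$ and $V(x,\varrho)$ from a common geometric object. It instead invokes the Barbasch--Moy description (Eq.~(\ref{eqn:Wstruct})) of the $W_{\fG(t)}$-structure of $V(x,\varrho)$: $\tau(x,\varrho)$ occurs exactly once, and every other constituent is strictly larger in the closure partial order. This yields the one-dimensionality $\dim_\Cset\Hom_{W_{\fG(t)}}\bigl(\tau(x,\varrho),V(x,\varrho)\bigr)=1$, and the cocycle identity is then forced by \emph{normalising} the intertwiners $\pphi_a^V$ so that the induced operator on this one-dimensional $\Hom$ space is the identity. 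The inertia-group equality also comes from the partial order: the lowest $W_{\fG(t)}$-constituent of ${}^aV(x,\varrho)$ is $\tau(a\cdot x,{}^a\varrho)$, hence ${}^aV\simeq V$ iff ${}^a\tau\simeq\tau$. No geometry is needed at any stage.

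Your argument for the inertia groups is acceptable in spirit (equivariance of the Springer and Kazhdan--Lusztig parametrisations under automorphisms of $\fG(t)$). But the cocycle half has a real gap, which you flag yourself: you need the homological functoriality behind $\tau(x,\varrho)$ and the $K$-theoretic functoriality behind $V(x,\varrho)$ to be simultaneously intertwined by the same lifts $n_a\in C_H(t)$. Nothing in \cite{CG} supplies this for free --- in particular $V(x,\varrho)$ is the simple \emph{subquotient} of the geometric standard module, not an $A$-isotypic direct summand, so the $A_H(t)$-equivariant structure on its geometric model is not canonical on the nose; and the assertion that both cocycles literally equal ``the cocycle of $\varrho$ under the outer action'' is stronger than what the computation before Theorem~\ref{thm:RaRa} delivers (one gets a relation among three cocycles, not an identification of each with $\varrho$'s). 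The device you are missing is precisely the multiplicity-one $\Hom$ space, which gives a canonical one-dimensional line on which to normalise both families of intertwiners at once, and which lets the paper sidestep the compatibility-of-functorialities question entirely.
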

\begin{proof} 
Recall that the \emph{closure order on nilpotent adjoint orbits} is defined as
follows
\[ \orb_1\le\orb_2\quad\text{when $\orb_1\subset\overline{\orb_2}$.}\]
\[ \orb_1\le\orb_2\quad\text{when $\orb_1\subset\overline{\orb_2}$.}\]
For $x$ a nilpotent element of $\fc(t)$, we will denote by $\orb_{x}$ the
nilpotent adjoint orbit which contains $x$.
Then as in \cite[(6.5)]{BM}, we define a
\emph{partial order on the representations of $W_{\fG(t)}$} by
\begin{equation} \label{eqn:ordering}
\tau(x_1,\varrho_1)\le\tau(x_2,\varrho_2)\quad\text{when
$\orb_{x_1}\le{\orb}_{x_2}$}.\end{equation}
In this partial order, the trivial representation of $W(t)$ is a minimal
element
and the sign representation of $W(t)$ is a maximal element.

The $W_{\fG(t)}$-structure of $V(x,\varrho)$ is
\begin{equation} \label{eqn:Wstruct}
V(x,\varrho)|_{W_{\fG(t)}}\,=\,\tau(x,\varrho)\,\oplus\,
\bigoplus_{(x_1,\varrho_1)\atop\tau(x,\varrho)<\tau(x,\varrho_1)}
m_{(x_1,\varrho_1)}\,\tau(x_1,\varrho_1),\end{equation}
where the $m_{(x_1,\varrho_1)}$ are non-negative integers. 
(In case $\cC(t)$ has connected centre, (\ref{eqn:Wstruct}) is implied by
\cite[Theorem~6.3~(1)]{BM}, the proof in the general case follows the
same lines.)
In particular, it follows from (\ref{eqn:Wstruct}) that
\begin{equation} \label{eqn:dim}
\dim_{\Cset}\Hom_{W_{\fG(t)}}\left(\tau(x,\varrho),V(x,\varrho)\right)=1.
\end{equation}

Let $a\in A_H(t)$. Since the action of $A_H(t)$ on $W_{\fG(t)}$ comes from
its
action on the root datum, we have (see \cite[2.6.1, 2.7.3]{R}):
\[{}^a\tau(x,\varrho)=\tau(a \cdot x,{}^a\varrho).\] 
Then
\[{}^a V(x,\varrho)|_{W_{\fG(t)}}\,=\,\tau(a \cdot x,{}^a\varrho)\,\oplus\,
\bigoplus_{(x_1,\varrho_1)\atop\tau(x,\varrho)\le\tau(x_1,\varrho_1)}
m_{(x_1,\varrho_1)}\,\tau(a\cdot x,{}^a\varrho_1).\]
Since $\tau(x,\varrho)\le\tau(x_1,\varrho_1)$ if and only if
$\chi(a\cdot x,{}^a\varrho)\le\tau(a\cdot  x_1,{}^a\varrho_1)$, it follows
that ${}^a V(x,\varrho)$ corresponds to the $\fG(t)$-conjugacy class of
$(a\cdot x,{}^a\varrho)$ via the bijection induced by~(\ref{eqnarray:bij}).

Hence \[{}^a V(x,\varrho)\simeq V(x,\varrho)\quad\text{ if and only if }\quad
{}^a\tau(x,\varrho)\simeq\tau(x,\varrho).\] The equality of the inertia
subgroups
\[A_H(t)_{V(x,\varrho)}=A_H(t)_{\tau(x,\varrho)}=:A_H(t)_{x,\varrho}\]
follows. 

Let $\left\{\pphi_a^V\,:\,a \in A_H(t)_{x,\varrho}\right\}$ (resp.
$\left\{\pphi_a^\tau\,:\,a \in A_H(t)_{x,\varrho}\right\}$) a family of
isomorphisms for $V=V(x,\varrho)$ (resp. $\tau=\tau(x,\varrho)$) which determines the
cocycle $\eta_V$ (resp. $\eta_\tau$).
We have
\[\Hom_{W_{\fG(t)}}(\tau,V)\overset{\pphi_a^V}\to
\Hom_{W_{\fG(t)}}(\tau,{}^{a^{-1}}V)\overset{\pphi_a^\tau}\to
\Hom_{W_{\fG(t)}}({}^{{a}^{-1}}\tau,{}^{a^{-1}}V).\]
The composed map is given by a scalar, since
by Eqn.~(\ref{eqn:dim}) these spaces are one-dimensional. We normalize
$\pphi_a^V$ so that this scalar equals to one. This forces $\eta_V$ and
$\eta_\tau$ to be equal.
\end{proof}  

\begin{lem} There is a bijection between Springer parameters
and Reeder parameters for the group $C_H(t)$:
\[(x,\varrho,\psi)\mapsto (\Xi,\varrho,\psi).\]
\end{lem}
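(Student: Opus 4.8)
The plan is to deduce the statement by assembling three earlier results --- Lemma~\ref{lem:bije}, Lemma~\ref{lem:cocycles}, and the Ram--Ramagge Theorem~\ref{thm:RaRa} --- all applied relative to the semidirect decomposition $W_H(t)=W_{\fG(t)}\rtimes A_H(t)$ of Lemma~\ref{lem:centrals} and the identification $A=A_x=A_\Psi=A_\Xi$ of Lemma~\ref{lem:AAA}.

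First I would spell out the two sets of parameters. A \emph{Springer parameter for $C_H(t)$} is a triple $(x,\varrho,\psi)$ where $(x,\varrho)$ is a Springer parameter for the connected group $\fG(t)=C_H^0(t)$ --- so $x$ is nilpotent in $\fc(t)$ and $\varrho\in\cR_{top}(A_x,\bB_x)$, producing via~(\ref{eqn:Springercor}) an irreducible representation $\tau=\tau(x,\varrho)$ of $W_{\fG(t)}$ --- and $\psi$ is a simple module over the twisted group algebra $\Cset[A_H(t)_\tau]_{\eta_\tau}$ of the inertia group of $\tau$ in $A_H(t)$; by Theorem~\ref{thm:RaRa} with $\cH=\Cset[W_{\fG(t)}]$ and $\cA=A_H(t)$, such triples are precisely the labels of $\Irr(W_H(t))$, i.e.\ the Springer correspondence for the (possibly disconnected) group $C_H(t)$. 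A \emph{Reeder parameter for $C_H(t)$} is a triple $(\Xi,\varrho,\psi)$ where $(\Xi,\varrho)$ is a Reeder parameter for $\fG(t)$ (equivalently, by Lemma~\ref{lem:bije}, for the predual $\cC(t)$), and $\psi$ is a simple module over $\Cset[A_H(t)_V]_{\eta_V}$ with $V=V(x,\varrho)$ the real tempered simple $\cH(\cC(t),\cI)$-module attached to $(x,\varrho)$; by Theorem~\ref{thm:RaRa} with $\cH=\cH(\cC(t),\cI)$ and $\cA=A_H(t)$, such triples label the real tempered simple modules of $\cH(\cC(t),\cI)\rtimes A_H(t)$, the Iwahori--Hecke algebra of $C_H(t)$.

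Then I would construct the map. Given $(x,\varrho,\psi)$, let $\gamma$ be the Jacobson--Morozov homomorphism attached to $x$, so that $x=d\gamma(e_0)$, and let $\Xi$ be defined from $\gamma$ as in~(\ref{eqn:Xi}). By Lemma~\ref{lem:bije} the assignment $(x,\varrho)\mapsto(\Xi,\varrho)$ --- which is nothing but the bijection~(\ref{eqnarray:bij}) for the connected group, with $\varrho$ literally unchanged because $\cR_{top}(A,\bB_x)=\cR(A,\bB^\Xi)$ --- is a bijection between Springer and Reeder parameters for $\fG(t)$. To upgrade this to triples I would invoke Lemma~\ref{lem:cocycles}: the inertia subgroups $A_H(t)_{\tau(x,\varrho)}$ and $A_H(t)_{V(x,\varrho)}$ coincide, and the cocycles $\eta_{\tau(x,\varrho)}$ and $\eta_{V(x,\varrho)}$ can be chosen equal, so $\Cset[A_H(t)_\tau]_{\eta_\tau}$ and $\Cset[A_H(t)_V]_{\eta_V}$ are the same algebra and $\psi$ is the same datum on either side. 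Hence $(x,\varrho,\psi)\mapsto(\Xi,\varrho,\psi)$ is well-defined, and it is bijective because each of its three coordinates is matched bijectively; the uniqueness clause of Theorem~\ref{thm:RaRa}, that $V\rtimes\psi\simeq V'\rtimes\psi'$ forces $V$, $V'$ to be $A_H(t)$-conjugate and $\psi\simeq\psi'$, rules out any collapsing or overcounting on either side.

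Since the substance is carried by Lemmas~\ref{lem:bije} and~\ref{lem:cocycles}, I do not expect a serious obstacle remaining; the point that genuinely needs care --- and that I would emphasize --- is the compatibility of the two Clifford-theoretic pictures, namely that the $A_H(t)$-action defining the crossed product $\cH(\cC(t),\cI)\rtimes A_H(t)$ on the Reeder side is the same as the one defining $\Cset[W_{\fG(t)}]\rtimes A_H(t)$ on the Springer side. Both are induced from the action of $A_H(t)=\pi_0(C_H(t))$ on the root datum of $\fG(t)$, and it is precisely this compatibility --- via the identity ${}^a\tau(x,\varrho)=\tau(a\cdot x,{}^a\varrho)$ and the $W_{\fG(t)}$-structure~(\ref{eqn:Wstruct}) of $V(x,\varrho)$ --- that underpins the proof of Lemma~\ref{lem:cocycles}; once that lemma is in hand, the present statement follows at once.
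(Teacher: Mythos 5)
Your proof is correct and takes essentially the same approach as the paper: the paper's proof is the single sentence ``Lemma~\ref{lem:cocycles} allows us to extend the bijection~(\ref{eqnarray:bij}) from $\fG(t)$ to $C_H(t)$,'' and your detailed account --- the two applications of Theorem~\ref{thm:RaRa}, the matching of inertia groups and cocycles via Lemma~\ref{lem:cocycles}, and the base bijection from Lemma~\ref{lem:bije} --- is exactly the argument that one-liner is implicitly invoking, drawing on the Clifford-theoretic setup already laid out in the text preceding the lemma.
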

\begin{proof}
Lemma~\ref{lem:cocycles} allows us to extend the bijection 
(\ref{eqnarray:bij})  from $\fG(t)$ to $C_H(t)$.
\end{proof}
\begin{lem} We have
\[
\bB^{\Psi} = \bB^{\Xi}.
\]
\end{lem}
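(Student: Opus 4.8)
The plan is to compare the subgroups $S_\Psi$ and $S_\Xi$ of $\fG(t)$ directly, using the splitting $\cW_F^{\ab}=I_F^{\ab}\times\langle\Frob\rangle$ of~(\ref{eqn:splitting}). By the formula~(\ref{eqn:Psi}) defining $\Psi$, the subgroup $S_\Psi=\Psi(\cW_F\times B_2)$ is generated by $\hlambda(I_F^{\ab})$, by $t$ (the image of $\Frob$), and by $\gamma(B_2)$; by the formula~(\ref{eqn:Xi}), $S_\Xi=\Xi(\cW_F\times B_2)$ is generated just by $\hlambda(I_F^{\ab})$ and $\gamma(B_2)$, since $\Xi$ kills $\Frob$. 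Thus $S_\Xi\subseteq S_\Psi$, and the only generator of $S_\Psi$ possibly missing from $S_\Xi$ is $t$.

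The crux is that both $t$ and every element of $\hlambda(I_F^{\ab})$ lie in the centre of $\fG(t)$. First, $H$ is connected reductive (as noted after~(\ref{eqn:H})), so, $t$ being semisimple, its centraliser $C_H(t)$ has connected reductive identity component $\fG(t)=C_H^0(t)$; since $t$ lies in the torus $T\subseteq C_H(t)$, which is connected, we have $t\in\fG(t)$, and by construction $t$ commutes with every element of $C_H(t)$, hence $t$ is central in $\fG(t)$. Second, $H=G_{\hlambda}$ is by definition the centraliser in $G$ of the image of $\hlambda$, so $\hlambda(I_F^{\ab})$ commutes with all of $H$, a fortiori with $\fG(t)\subseteq H$; and $\hlambda(I_F^{\ab})\subseteq T\subseteq\fG(t)$ as before, so $\hlambda(I_F^{\ab})$ too is central in $\fG(t)$.

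Now I would invoke the standard fact that the centre of a connected reductive group is contained in each of its maximal tori, hence in each of its Borel subgroups. Applied to $\fG(t)$, this gives that every Borel subgroup $B'$ of $\fG(t)$ automatically contains $t$ and $\hlambda(I_F^{\ab})$. Combining with the first paragraph: for any Borel subgroup $B'$ of $\fG(t)$ one has $B'\supseteq S_\Psi$ if and only if $B'\supseteq\gamma(B_2)$, if and only if $B'\supseteq S_\Xi$. Therefore $\bB^\Psi$ and $\bB^\Xi$ coincide --- both being the variety of Borel subgroups of $\fG(t)$ that contain $\gamma(B_2)$ (equivalently, that contain $S_\Xi=\gamma(B_2)\cdot\hlambda(I_F^{\ab})$). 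I do not expect a genuine obstacle here; the only things to be careful about are justifying the centrality claims --- in particular that $\fG(t)$ really is connected reductive so that the ``centre $\subseteq$ every Borel'' principle applies --- and observing that the three generators $\hlambda(I_F^{\ab})$, $t$, $\gamma(B_2)$ of $S_\Psi$ pairwise commute (they all lie in $C_H(t)$, and $\hlambda(I_F^{\ab})$ is even central in $H$), so that $S_\Psi$ is simply their product $\hlambda(I_F^{\ab})\cdot\langle t\rangle\cdot\gamma(B_2)$.
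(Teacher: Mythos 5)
Your proof is correct and follows essentially the same underlying strategy as the paper: reduce to showing that every Borel subgroup of $\fG(t)$ automatically contains the extra generators of $S_\Psi$ beyond $\gamma(B_2)$, so that the two incidence varieties coincide. The paper's proof establishes $t\in\fb$ by picking a maximal torus $T_\fb\subset\fb$, observing that $t$ centralizes $T_\fb$ (since $T_\fb\subset\fG(t)\subset C_H(t)$), and invoking $Z_{\fG(t)}(T_\fb)=T_\fb$ for the connected group $\fG(t)$; you instead observe directly that $t\in Z(\fG(t))$ (because $\fG(t)\subset C_H(t)$) and cite the fact that the centre of a connected reductive group sits inside every Borel. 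These are two phrasings of the same basic fact, with yours being a shade more economical. One small point in your favour: the paper states $S_\Psi=\langle t\rangle\gamma(B_2)$ and $S_\Xi=\gamma(B_2)$, silently dropping the factor $\hlambda(I_F^{\ab})$ which the definitions~(\ref{eqn:Psi}),~(\ref{eqn:Xi}) clearly put in both images; you correctly include it and dispatch it by the same centrality argument ($\hlambda(I_F^{\ab})\subset Z(H)\cap T\subset Z(\fG(t))$), so your write-up is actually a bit more careful on this detail.
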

\begin{proof}  
We note that
\[
S_{\Psi} = < t > \gamma(B_2), \quad \quad S_{\Xi} = \gamma(B_2)
\]

Let $\fb$ denote a Borel subgroup of the reductive group $C_H(t)$.  Since $\fb$ is maximal among the connected solvable subgroups of $C_H(t)$,  we have 
$\fb \subset \fG(t)$.   Then we have $\fb = T_{\fb}U_{\fb}$ with $T_{\fb}$ a maximal torus in $\fG(t)$, and 
$U_{\fb}$ the unipotent radical of $\fb$.  Note that $T_{\fb} \subset \fG(t)$.  Therefore $yt = ty$ for all $y \in T_{\fb}$. This means that $t$ centralizes $T_{\fb}$, i.e. $t \in Z(T_{\fb})$. In a connected Lie group such as $\fG(t)$, we have 
\[
Z(T_{\fb}) = T_{\fb}\]
 so that $t \in T_{\fb}$. Since $T_{\fb}$ is a group, it follows that $< t > \, \subset T_{\fb}$. 

As a consequence, we have
\[
\fb \supset \,  < t > \gamma(B_2) \iff \fb  \supset \gamma(B_2).
\]
\end{proof}

\medskip

Let $S_{\Upsilon} = \Upsilon(\cW_F \times B_2)$, a solvable subgroup of $H$.
Let $\bB^{\Upsilon}$ denote the variety of Borel subgroups of $H$ containing
$S_{\Upsilon}$.

\begin{lem} We have
\[
\mathcal{R}(A, \bB^{\Upsilon}) = \mathcal{R}(A, \bB^{\Psi})
\]
\end{lem}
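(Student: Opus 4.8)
The plan is to show that the two varieties $\bB^{\Upsilon}$ and $\bB^{\Psi}$ are in fact canonically $A$-equivariantly isomorphic, so that the equality of the sets of irreducible $A$-representations occurring in their homologies is immediate. Recall $\Upsilon\colon \cW_F^{\ab}\times\SL(2,\Cset)\to H$ and $\Psi\colon \cW_F^{\ab}\times\SL(2,\Cset)\to\fG(t)$ are given by the same formula $(w,\Frob,Y)\mapsto\hlambda(w)\cdot t\cdot\gamma(Y)$, the only difference being the target group: $S_{\Upsilon}=\Upsilon(\cW_F\times B_2)$ is viewed inside $H$, while $S_{\Psi}=\Psi(\cW_F\times B_2)$ is viewed inside $\fG(t)=C_H^0(t)$. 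In particular $S_{\Upsilon}=S_{\Psi}$ as subsets of $H$. First I would note that $\hlambda$ takes values in $T\subset C_H(t)$ by the definition of $H=G_{\hlambda}$ together with the fact that $\hlambda$ commutes with $t$ (indeed $t\in T$ and $\hlambda$ has image in $T$), and that $\gamma$ takes values in $\fC(t)=C_H^0(t)=\fG(t)$ by construction. Hence $S_{\Upsilon}\subset\fG(t)$.

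The key step is then the following: a Borel subgroup $\fb$ of $H$ contains $S_{\Upsilon}$ if and only if it is contained in $\fG(t)$ and, as a Borel subgroup of $\fG(t)$, contains $S_{\Psi}$. One direction is the argument already used in the previous lemma: $S_{\Upsilon}$ contains the semisimple element $t$ (take $w=1$, $Y=1$), so any $\fb\supset S_{\Upsilon}$ contains $t$; since $\fb$ is connected solvable and $t$ is semisimple, $\fb$ lies in the centralizer of a maximal torus of $\fb$ through which $t$ factors — more directly, $\fb\cap\fG(t)$ is a Borel of $\fG(t)$ and, because $\fb$ is connected solvable containing $t$, one gets $\fb\subset C_H(t)$, hence $\fb\subset C_H^0(t)=\fG(t)$ by connectedness of $\fb$. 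Conversely, any Borel of $\fG(t)$ is a connected solvable subgroup of $H$, hence contained in a Borel $\fb$ of $H$; if it contains $S_{\Psi}$ then $\fb\supset S_{\Upsilon}$, and a dimension count (all these Borels have the same dimension, that of a Borel of $\fG(t)$, which contains $T$ and $\dim$ equals $\dim\fb_{\fG(t)}$) forces equality. This establishes a bijection $\bB^{\Psi}\to\bB^{\Upsilon}$, $\fb\mapsto\fb$, and it is clearly a morphism of varieties in both directions, hence an isomorphism.

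Finally I would check that this isomorphism is equivariant for the action of $A$. Recall $A=\pi_0(C(\im\Psi))=\pi_0(C(\im\Upsilon))$ by the earlier observation that $C(\im\Psi)=C(\im\Upsilon)$ (any element centralizing $\im\Upsilon$ centralizes $\Upsilon(\Frob)=t$, so lands in $C_H(t)$, where it centralizes $\im\Psi$; the reverse inclusion is the same). Since $C(\im\Upsilon)=C(\im\Psi)\subset\fG(t)$ acts on $H$ by conjugation and preserves $\fG(t)$, it acts compatibly on $\bB^{\Upsilon}$ and on $\bB^{\Psi}$, and the identification $\fb\mapsto\fb$ intertwines these actions; passing to $\pi_0$ gives an $A$-equivariant homeomorphism $\bB^{\Psi}\cong\bB^{\Upsilon}$. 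Therefore $H_*(\bB^{\Upsilon},\Cset)\cong H_*(\bB^{\Psi},\Cset)$ as $A$-modules, and the sets of irreducible constituents agree, i.e. $\mathcal{R}(A,\bB^{\Upsilon})=\mathcal{R}(A,\bB^{\Psi})$.

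I expect the main obstacle to be the careful verification that every Borel of $H$ containing $t$ actually lies inside $\fG(t)=C_H^0(t)$ rather than merely meeting it in a Borel; here one must use that a connected solvable group containing a semisimple element $t$ is contained in the connected centralizer of $t$, which follows because such a group lies in a Borel, is conjugate into a maximal torus normalizing argument, and $t$ being semisimple sits in some maximal torus of $\fb$ — and a maximal torus centralizes $t$ only if it lies in $C_H^0(t)$. Making this rigorous, together with the dimension count that upgrades the injection $\bB^{\Psi}\hookrightarrow\bB^{\Upsilon}$ to a bijection, is the technical heart of the argument; the equivariance is then formal.
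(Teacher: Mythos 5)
There is a genuine gap: the claimed $A$-equivariant isomorphism $\bB^{\Psi}\cong\bB^{\Upsilon}$ is false in general, and the step on which it rests is incorrect.

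You assert that if a Borel subgroup $\fb$ of $H$ contains the semisimple element $t$, then ``because $\fb$ is connected solvable containing $t$, one gets $\fb\subset C_H(t)$, hence $\fb\subset C_H^0(t)=\fG(t)$.'' This does not follow. A connected solvable subgroup containing a semisimple element need not centralize that element; indeed a Borel of $H$ is a \emph{maximal} connected solvable subgroup of $H$, so it cannot be contained in $\fG(t)$ unless $\fG(t)=H$. Already in $H=\GL(2,\Cset)$ with $t=\mathrm{diag}(2,3)$ one has $\fG(t)=T$, yet the Borel of upper-triangular matrices contains $t$ and is clearly not inside $T$. In the previous lemma ($\bB^{\Psi}=\bB^{\Xi}$) the argument works because $\fb$ is assumed from the start to be a Borel of the reductive group $C_H(t)$, hence inside $\fG(t)$, whence its maximal torus $T_{\fb}\subset\fG(t)$ commutes with $t$ and one concludes $t\in T_{\fb}$; that situation does not transfer to Borels of the larger group $H$. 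The ``dimension count'' in the converse direction also fails: a Borel of $\fG(t)$ has dimension $\dim T+|\Phi^+_{\fG(t)}|$ while a Borel of $H$ has dimension $\dim T+|\Phi^+_H|$, and these differ whenever $t$ is not central in $H$.

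The correct relationship is not an isomorphism but a finite covering by components. Write $\bB^t$ for the variety of Borel subgroups of $H$ containing $t$. By \cite[Prop.~8.8.7]{CG} the group $\fG(t)$ acts on $\bB^t$ with finitely many orbits $\bB_1,\dots,\bB_m$, which are precisely the connected (equivalently, irreducible) components, and the map $B\mapsto B\cap\fG(t)$ restricts to a $\fG(t)$-equivariant isomorphism of each $\bB_j$ onto $\Flag\,\fG(t)$. Since $S_{\Upsilon}=S_{\Psi}$ and $t\in S_{\Upsilon}$, one has $\bB^{\Upsilon}=(\bB^t)^{\Psi}=\bigsqcup_j \bB_j^{\Psi}$, and each $\bB_j^{\Psi}$ is $A$-equivariantly isomorphic to $\bB^{\Psi}$. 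Hence $H_*(\bB^{\Upsilon},\Cset)\cong H_*(\bB^{\Psi},\Cset)^{\oplus m}$ as $A$-modules, and a direct sum of isomorphic modules has the same set of irreducible constituents as a single copy, giving $\cR(A,\bB^{\Upsilon})=\cR(A,\bB^{\Psi})$. So the conclusion is right, but the argument must pass through this decomposition into components rather than a single variety isomorphism.
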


\begin{proof}    
We denote the Lie algebra of $\fG(t)$ by $\fg(t)$, and the Lie algebra of $C_H(t)$ by $\fc_H(t)$ so that
\[
\fg(t) = \fc_H(t).
\]
We note that the codomain of $\Psi$ is $\fG(t)$.  

Let $\bB^t$ denote the variety of all Borel subgroups of $G$ which contain $t$.  Let $B \in \bB^t$. 
Then $B \cap \fG(t)$ is a Borel subgroup of $\fG(t)$.    

The proof  in \cite[p.471]{CG} depends on the fact that $\fG(t)$ is connected, and also on
a triangular decomposition of $\Lie(\fG(t))$:
\[
\Lie\,\fG(t) = \fn^t \oplus \ft \oplus \fn_{-}^t
\]
from which it follows that $\Lie\, B \cap \Lie\, \fG(t) = \fn^t \oplus \ft$ is a Borel subalgebra in $\Lie \,\fG(t)$.  The superscript ``$t$'' stands for 
the centralizer of $t$. 

There is a canonical map 
\begin{align} \label{eqn:(7)} 
\bB^t \to \Flag \, \fG(t), \quad B \mapsto B \cap \fG(t)
\end{align}
Now $\fG(t)$ acts by conjugation on $\bB^t$. We have
\begin{align}
\bB^t = \bB_1 \sqcup \bB_2 \sqcup \cdots \sqcup \bB_m
\end{align}
a disjoint union of $\fG(t)$-orbits,  see \cite[Prop. 8.8.7]{CG}. These orbits are the connected components of $\bB^t$, and the irreducible components of the projective variety
$\bB^t$. The above map~(\ref{eqn:(7)}), restricted to any one of these orbits, is a bijection from the $\fG(t)$-orbit onto $\Flag \, \fG(t)$ and is $\fG(t)$-equivariant. It is then clear that 
\[
\bB_j^{\Upsilon} \simeq \Flag \, \fG(t)^{\Psi}
\]
for each $1 \leq j \leq m$.  We also have $t \in S_\Upsilon = S_{\Psi}$.  Now
\[
\bB^{\Upsilon} =  (\bB^t)^{\Upsilon} = (\bB^t)^{\Psi}
\]
and then
\[
H_*(\bB^{\Upsilon}, \Cset) = H_*(\bB_1^{\Psi}, \Cset) \oplus \cdots \oplus H_*(\bB_m^{\Psi}, \Cset)
\]
a direct sum of \emph{equivalent} $A$-modules.
Hence $\varrho$
occurs in $H_*( \bB^{\Upsilon},\Cset)$ if and only if it occurs
$H_*(\bB^{\Psi}, \Cset)$. 
\end{proof} 

\medskip
Recall that $x$ is a nilpotent element in $\fc(t)$ (the Lie algebra of
$\fG(t)$).
Define 
\[A^+:=\pi_0(Z_{C_H(t)}(x)).\]

\begin{lem}
We have
\[\cR(A,\bB^\Upsilon)=\cR(A^+,\bB^\Upsilon).\]
\end{lem}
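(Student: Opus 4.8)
The plan is to compare the two finite groups $A = A_x = \pi_0(Z_{\fG(t)}(x))$ and $A^+ = \pi_0(Z_{C_H(t)}(x))$ and show that, although $A^+$ may be strictly larger than $A$, the extra components act trivially on $H_*(\bB^\Upsilon,\Cset)$ — or, more precisely, that every irreducible representation of $A^+$ occurring in $H_*(\bB^\Upsilon,\Cset)$ is already a representation of the quotient $A$, and conversely every such representation of $A$ inflates to one of $A^+$. First I would observe, using Lemma~\ref{lem:AAA} together with the decomposition $Z_{\fG(t)}(x) = C(\im\,\Xi)\cdot U$, that $A = \pi_0(C(\im\,\Xi))$, and that the inclusion $\fG(t) = C_H^0(t) \subset C_H(t)$ induces a natural surjection (in fact a short exact sequence) $1 \to \pi_0(Z_{\fG(t)}(x)) \to \pi_0(Z_{C_H(t)}(x)) \to \pi_0(C_H(t))_x \to 1$, where the quotient is the stabilizer in $A_H(t) = \pi_0(C_H(t))$ of the nilpotent orbit $\orb_x$; this is exactly the group $A_H(t)_{x,\varrho}$-type datum appearing in Lemma~\ref{lem:cocycles}, before twisting by $\varrho$.

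The key geometric point is that the variety $\bB^\Upsilon = (\bB^t)^\Psi$ is a variety of Borel subgroups attached to the \emph{connected} group $\fG(t)$ (this is precisely what the preceding lemma, $\bB^\Upsilon \simeq \bigsqcup_j \bB_j^\Psi$ with each $\bB_j^\Psi \simeq \Flag\,\fG(t)^\Psi$, establishes), so the action of $Z_{C_H(t)}(x)$ on $H_*(\bB^\Upsilon,\Cset)$ factors through $\pi_0(Z_{C_H(t)}(x)) = A^+$, while the action of the connected group $Z_{\fG(t)}^0(x)$ is trivial on homology. Thus both $A$ and $A^+$ act on the same homology; the point is to show the sets of irreducibles appearing coincide. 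I would argue that an irreducible $A^+$-representation $\sigma$ that appears in $H_*(\bB^\Upsilon,\Cset)$ must restrict non-trivially to $A$ — because $A$ acts already with enough orbits on the irreducible components of $\bB_x$ (by the Springer theory recalled before Definition of $\cR_{top}$, the $A$-module structure of top homology already realizes $\varrho$) — and conversely, since the component group extension above is the one controlling Clifford theory, each $A$-representation occurring lifts. Concretely, the homology $H_*(\bB^\Upsilon,\Cset)$ as an $A^+$-module is induced/inflated from its $A$-module structure along the exact sequence above, so $\cR(A^+,\bB^\Upsilon)$ and $\cR(A,\bB^\Upsilon)$ determine each other and, by the multiplicity-one normalization implicit in~(\ref{eqn:dim}), are literally equal as sets of isomorphism classes under the identification of $A$-irreducibles with $A^+$-irreducibles that are trivial on $Z_{\fG(t)}^0(x)$.

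I would carry this out in three steps: (1) set up the component-group exact sequence $1 \to A \to A^+ \to A_H(t)_x \to 1$ and identify the quotient, using Lemma~\ref{lem:disconnected} applied to $M = Z_{C_H(t)}(x)$; (2) invoke the preceding lemma to rewrite $H_*(\bB^\Upsilon,\Cset)$ as a sum of copies of $H_*(\Flag\,\fG(t)^\Psi,\Cset) = H_*(\bB_x,\Cset)$ permuted by $A^+$ through the quotient $A_H(t)_x$, so that as an $A^+$-module it is $\Ind$ from a subgroup containing $A$ of the $A$-module $H_*(\bB_x,\Cset)$; (3) conclude by Frobenius reciprocity and the fact, from Springer theory, that the $A$-structure of this homology already sees exactly the representations $\varrho \in \cR(A,\bB_x)$, so passing to $A^+$ adds nothing new to the \emph{set} of irreducibles (each extends, none disappears). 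The main obstacle I anticipate is step (2): making precise that $A^+$ acts on the disjoint union $\bigsqcup_j \bB_j^\Psi$ by permuting the $m$ components according to the quotient $A_H(t)_x$ while acting within each component through $A$, and checking the components are permuted simply transitively (or at least that the stabilizer of a component is exactly $A$) — this is the delicate part, since it requires knowing that $C_H(t)$, not just $\fG(t)$, acts transitively on the components $\bB_1,\dots,\bB_m$ of $\bB^t$, for which one uses that $C_H(t)/\fG(t) = A_H(t)$ permutes the $\fG(t)$-orbits transitively (as they are a single $C_H(t)$-orbit, $\bB^t$ being a homogeneous-type space once $t$ is fixed). Once this transitivity and stabilizer computation is in place, the equality of the two representation sets is formal.
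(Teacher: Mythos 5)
Your approach diverges substantially from the paper's, and it has a genuine gap. The paper's proof does not try to compare $\cR(A,\bB^\Upsilon)$ and $\cR(A^+,\bB^\Upsilon)$ directly by analyzing how $A^+$ acts on the components of $\bB^t$; instead it chooses an isogeny $\iota\colon\tH\to H$ with $\tH_\der$ simply connected, lifts $t$ and $u=\exp(x)$ to $\tilde t,\tu$ in $\tH$, identifies $A$ with $\pi_0(Z_{C_{\tH}}(\tilde t,\tu))$ via $\iota(C_{\tH}(\tilde t))=\fG(t)$, and then invokes Reeder's Lemma~3.5.3, which packages exactly the Clifford-theoretic correspondence $(\tilde t,\tu,\varrho,\psi)\mapsto(t,u,\rho)$ between quadruples and triples, with $\rho\in\cR(A^+,\bB^\Upsilon)$ and $\rho|_A\supset\varrho$. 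In other words, the lemma's ``equality'' is really shorthand for the existence of this bijection of parameter sets, in which the extra Clifford datum $\psi$ plays an essential role; it is not a literal equality of sets of irreducibles of two different finite groups, and your attempt to read it that way leads you astray.

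The concrete gap in your argument is the transitivity claim in step~(2), which you yourself flag as the delicate point. It is false in general that $C_H(t)$ acts transitively on the connected components $\bB_1,\dots,\bB_m$ of $\bB^t$. The number of components is $[W_H:W_{\fG(t)}]$, whereas by Lemma~4.3 the group $A_H(t)=C_H(t)/\fG(t)$ has order $|W_H(t)|/|W_{\fG(t)}|$; since $W_H(t)\subsetneq W_H$ whenever $t$ is noncentral, $|A_H(t)|$ is strictly smaller than the number of components and cannot act transitively on them. Thus $H_*(\bB^\Upsilon,\Cset)$ is not a single induced $A^+$-module from the stabilizer of one component, and the Frobenius-reciprocity conclusion you want in step~(3) does not follow. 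Separately, your closing claim that $\cR(A,\bB^\Upsilon)$ and $\cR(A^+,\bB^\Upsilon)$ are ``literally equal as sets of isomorphism classes under the identification of $A$-irreducibles with $A^+$-irreducibles that are trivial on $Z^0_{\fG(t)}(x)$'' is not coherent: since $Z^0_{\fG(t)}(x)=Z^0_{C_H(t)}(x)$ is the common identity component, \emph{every} representation of $A^+$ is trivial on it, so this condition selects nothing, and in any case an irreducible of the normal subgroup $A$ need not extend to $A^+$ — that failure is precisely what the cocycle and the datum $\psi$ in Lemma~4.9 and Reeder's Lemma~3.5.3 are there to account for. If you want an elementary proof along your lines, you would need to work orbit-by-orbit on the set of components under $A_H(t)_x$, carry the Clifford cocycle explicitly, and in the end you would be reconstructing the content of Reeder's isogeny argument; it is cleaner to cite it as the paper does.
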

\begin{proof}
Choose an isogeny $\iota\colon\tH\to H$ with $\tH_\der$ simply connected 
(as in \cite[Theorem~3.5.4]{R}) such that $H=\tH/Z$ where $Z$ is a finite
subgroup of the centre of $\tH$ (see \cite[\S~3]{R}). Let
$\tilde t$ be a lift of $t$ in $\tH$, that is, $\iota(\tilde t)=t$.
Then we have (see \cite[\S~3.1]{R}):
\begin{equation} \label{eqn:iotacent}
\iota(C_{\tH}(\tilde t))=C_H^0(t)=\fG(t).\end{equation} 
Let $u:=\exp(x)$, a unipotent element in $\fG(t)$.  It follows from
Eqn.~(\ref{eqn:iotacent}) that there exists $\tu\in C_{\tH}(\tilde t)$
such that $u=\iota(\tu)$. 
Recall that $A=\pi_0(Z_{\fG(t)}(x))$. Then 
\[A\simeq\pi_0(Z_{\fG(t)}(u))=\pi_0(Z_{\iota(C_{\tH}(\tilde
t))}(\iota(\tu)))\simeq \pi_0(Z_{C_{\tH}}(\tilde t,\tu)),\] 
and $A$ is a subgroup of $\pi_0(Z_{C_H(t)}(u))\simeq A^+$ (see \cite[\S~3.2--3.3]{R}).

Recall from \cite[Lemma~3.5.3]{R} that 
\[(\tilde t,\tu,\varrho,\psi) \mapsto (t, u,
\rho)\] induces a bijection between
$G$-conjugacy classes of quadruples $(\tilde t,\tu,\varrho,\psi)$ and 
$G$-conjugacy classes of triples $(t,u,\rho)$, where $\rho\in\cR(A^+,\bB^\Upsilon)$ is 
such that the restriction of $\rho$
to $A$ contains $\varrho$. 
\end{proof}

\begin{lem}
We have
\[\cR(A^+,\bB^\Upsilon)=\cR(A^+,\bB^\Phi).\]
\end{lem}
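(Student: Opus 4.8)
The plan is to compare the two varieties of Borel subgroups $\bB^{\Upsilon}$ and $\bB^{\Phi}$ directly, where $\Upsilon$ and $\Phi$ are given by the same formula $(w,\Frob,Y)\mapsto \hlambda(w)\cdot t\cdot\gamma(Y)$ but with codomains $H$ and $G$ respectively. First I would observe that $\bB^{\Upsilon}$ consists of the Borel subgroups of $H$ containing $S_{\Upsilon}=\Upsilon(\cW_F\times B_2)$, while $\bB^{\Phi}$ consists of the Borel subgroups of $G$ containing the \emph{same} image $S_{\Phi}=S_{\Upsilon}$. Since $H=G_{\hlambda}$ is the centralizer in $G$ of $\im\hlambda$, and $\hlambda(I_F^{\ab})\subset S_{\Upsilon}$, every element of $S_{\Upsilon}$ centralizes $\hlambda$, so $S_{\Upsilon}\subset H$. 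The key point is that the Borel subgroups of $G$ containing $S_{\Phi}$ are in bijection with the Borel subgroups of $H=C_G(\hlambda)$ containing $S_{\Phi}$, via $B\mapsto B\cap H$; this is precisely the assertion, analogous to the map~(\ref{eqn:(7)}) already used in the previous lemma, that intersecting with the connected centralizer of a semisimple set carries the fixed-point Borel variety of $G$ isomorphically (componentwise) onto the flag variety data of the centralizer.

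In more detail, I would proceed as follows. Let $\bB^{\hlambda}$ denote the variety of Borel subgroups of $G$ containing $\im\hlambda$ (equivalently, fixed by the torus $\hlambda(I_F^{\ab})$, since $\hlambda$ has image in a torus and $H$ is connected by the simply-connected hypothesis). Because $\hlambda(I_F^{\ab})$ is a set of commuting semisimple elements, $\bB^{\hlambda}$ is nonempty and, by the same argument as in the preceding lemma (using that $H=C_G(\hlambda)$ is connected, Steinberg, and the triangular decomposition of $\Lie H$), the map $B\mapsto B\cap H$ is a bijection of $\bB^{\hlambda}$ onto $\Flag\,H$ which is $H$-equivariant. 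Now $S_{\Phi}=\Phi(\cW_F\times B_2)$ contains $\im\hlambda$, so any Borel subgroup of $G$ containing $S_{\Phi}$ automatically lies in $\bB^{\hlambda}$; hence
\[
\bB^{\Phi}=\{B\in\bB^{\hlambda}:B\supset S_{\Phi}\}\simeq\{B'\in\Flag\,H:B'\supset S_{\Phi}\}=\bB^{\Upsilon},
\]
the last equality because $S_{\Phi}=S_{\Upsilon}$. This isomorphism is equivariant for the action of $A^{+}=\pi_0(Z_{C_H(t)}(x))$, which sits inside $C_H(\im\Upsilon)=C_G(\im\Phi)=G_{\Phi}$ and acts compatibly on both sides (the centralizer of $\im\Phi$ in $G$ equals the centralizer of $\im\Upsilon$ in $H$, since anything in $G$ commuting with $\im\Phi$ in particular commutes with $\im\hlambda$ and so lies in $H$).

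Consequently the $A^{+}$-modules $H_*(\bB^{\Upsilon},\Cset)$ and $H_*(\bB^{\Phi},\Cset)$ are isomorphic, and therefore an irreducible representation $\rho$ of $A^{+}$ occurs in one if and only if it occurs in the other; that is, $\cR(A^+,\bB^\Upsilon)=\cR(A^+,\bB^\Phi)$. The step I expect to require the most care is verifying that $B\mapsto B\cap H$ really is a bijection $\bB^{\hlambda}\simeq\Flag\,H$ and not merely a surjection with finite fibres: here one uses crucially that $H=C_G(\hlambda)$ is \emph{connected} (guaranteed by the simply-connected derived group assumption on $G$, cited already from \cite[p.~396]{Roc} and Steinberg's theorem \cite[\S 8.8.7]{CG}), so that $\bB^{\hlambda}$ is a single $H$-orbit and the intersection with $H$ picks out a well-defined Borel of $H$; the matching of stabilizers then gives the bijection. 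Everything else is bookkeeping identifying the various centralizer groups $G_{\Phi}$, $C_H(\im\Upsilon)$, and $A^{+}$.
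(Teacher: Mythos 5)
Your overall strategy is the right one and is indeed the analogue of the argument used for the previous lemma (the comparison of $\bB^{\Upsilon}$ with $\bB^{\Psi}$), but there is a genuine error in the middle: the claim that $\bB^{\hlambda}$ is a \emph{single} $H$-orbit, and hence that $B\mapsto B\cap H$ is a bijection $\bB^{\hlambda}\simeq\Flag\,H$ and $\bB^{\Phi}\simeq\bB^{\Upsilon}$, is false in general. Connectedness of $H=C_G(\hlambda)$ does not make $\bB^{\hlambda}$ connected. For instance, take $G=\SL(2,\Cset)$ (which has simply-connected derived group) and let $\hlambda$ have image generated by $\mathrm{diag}(i,-i)$; then $H=T$ is connected, $\Flag\,H$ is a point, yet $\bB^{\hlambda}$ consists of the two Borel subgroups containing $T$, so it is a disjoint union of two $H$-orbits. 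In general $\bB^{\hlambda}$ breaks up into several $H$-orbits, each one isomorphic to $\Flag\,H$ via $B\mapsto B\cap H$, exactly as $\bB^{t}$ broke into $\fG(t)$-orbits in the proof of the previous lemma.

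The fix is to run the same orbit decomposition you already invoked by analogy, rather than assert a single orbit: write $\bB^{\hlambda}=\bB_1\sqcup\cdots\sqcup\bB_m$ as a disjoint union of $H$-orbits. Since $H$ is connected it stabilises each $\bB_j$, and since $Z_{C_H(t)}(x)\subset H$, the group $A^{+}=\pi_0(Z_{C_H(t)}(x))$ also stabilises each $\bB_j$. Intersecting with $S_{\Phi}=S_{\Upsilon}$ then gives $\bB^{\Phi}=\bigsqcup_j\bB_j^{\Phi}$ with each $\bB_j^{\Phi}$ carried $A^{+}$-equivariantly onto $\bB^{\Upsilon}$ by $B\mapsto B\cap H$. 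Hence $H_*(\bB^{\Phi},\Cset)$ is a direct sum of $m$ copies of the $A^{+}$-module $H_*(\bB^{\Upsilon},\Cset)$; the two modules are therefore not isomorphic when $m>1$, but they have the same irreducible constituents, which is exactly the asserted equality $\cR(A^{+},\bB^{\Upsilon})=\cR(A^{+},\bB^{\Phi})$. Note for comparison that the paper itself does not spell out this argument but simply refers to Reeder's Lemma~4.4.1; your reconstruction, once the multiple-component issue is corrected, is a reasonable account of what that lemma supplies.
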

\begin{proof}
It follows from \cite[Lemma~4.4.1]{R}.
\end{proof}

The proof can be reversed.   Here is the reason for this claim:  Lemmas 4.5, 4.6, 4.8 4.10 -- 4.13 are all equalities, and Lemma 4.9 is a bijection.

 This creates a canonical bijection between the extended quotient of the second kind $(T\q W^{\fs})_2$ and $\fQ(G)_{\hlambda}$:
 \begin{align}
\mu \colon (T\q W^{\fs})_2 \longrightarrow \fQ(G)_{\hlambda}, \quad \quad (t, x, \varrho, \psi) \mapsto (\Phi, \rho).
\end{align}
This in turn creates a bijection 
 \begin{align}
T\q W^{\fs} \longrightarrow \fQ(G)_{\hlambda}.
\end{align}
This bijection is not canonical in general, depending as it does
on a choice of bijection between the set of conjugacy classes in
$W_H(t)$ and the set of irreducible characters of $W_H(t)$.
When $G = \GL(n)$, the finite group $W_H(t)$ is a product of
symmetric groups: in this case there is a canonical bijection
between the set of conjugacy classes in $W_H(t)$ and the set of
irreducible characters of $W_H(t)$, by the classical theory of Young tableaux.

To close this section, we will consider the case of $\GL(n,F))$, and the Iwahori point $\fii$ in the Bernstein spectrum of $\GL(n,F)$.
The Langlands dual of $\GL(n,F)$ is $\GL(n,\C)$, and we will take $T$ to be the standard maximal torus in $\GL(n,\C)$.   The Weyl group is the symmetric group $S_n$.   We will denote our bijection, in this case canonical, as follows:
\[
\mu_{F}^{\fii} : T\q W \to \fP(\GL(n,F))
\]
Let $E/F$ be a finite Galois extension of the local field $F$.  According to \cite[Theorem 4.3]{MP}, we have a commutative diagram
\[
\begin{CD}
T\q W @> \mu_{F}^{\fii} >>  \fP(\GL(n,F))\\
@ V   VV        @VV \RBC_{E/F} V\\
T\q W @> \mu_{E}^{\fii} >>  \fP(\GL(n,E))
\end{CD}
\]
In this diagram, the right vertical map $\RBC_{E/F}$ is the standard base change map  sending one Reeder parameter to another as follows:
\[
(\Phi,1) \mapsto (\Phi_{|W_E},1).
\]

Let  \[f = f(E,F)\] denote the residue degree of the extension $E/F$.    We proceed to describe the left vertical map.   We note that  the action of W on T is as automorphisms of the algebraic group T.   Since  $T$  is a group, the map \[T \to T, \quad  t \mapsto t^f\]  is well-defined for any positive integer $f$.   The map
\[
\widetilde{T} \to \widetilde{T}, \quad (t,w) \mapsto (t^f,w)
\]
is also well-defined, since
\[
w\cdot t^f = wt^fw^{-1} = wtw^{-1}wtw^{-1} \cdots wtw^{-1} = t^f.
\]
Since
\[
\alpha\cdot(t^f) = (\alpha\cdot t)^f
\]
for all $\alpha \in W$, this induces a map
\[
T\q W \to T\q W
\]
which is an endomorphism (as algebraic variety) of the extended quotient $T\q W$.  We shall refer to this endomorphism as the \emph{base change endomorphism of degree $f$.}  The left vertical map is the base change endomorphism of degree $f$, according to \cite[Theorem 4.3]{MP}.  That is, our bijection $\mu^{\fii}$  is compatible with base change for $\GL(n)$. 

When we restrict our base change endomorphism from the extended quotient $T\q W$ to the ordinary quotient $T/W$, we see that the  commutative diagram 
containing $\RBC_{E/F}$ is consistent with \cite[Lemma 4.2.1]{Haines}.

\section{Interpolation} 

We will now provide details for the interpolation procedure described in \S1.  We will focus on the Iwahori point $\fii \in \fB(\mathcal{G})$, \ie on the smooth irreducible representations of $\mathcal{G}$ which admit nonzero Iwahori fixed vectors.    To simplify notation, we will write $\mu = \mu^{\fii}$.   Let $\fP(G)$ denote the set of conjugacy classes in
$G$ of Kazhdan-Lusztig parameters.   For each $s \in \Cset^{\times}$, we construct a  commutative diagram:
\[
\begin{CD}
T\q W @> \mu >>  \fP(G)\\
@ V \pi_s VV        @VV i_s V\\
T/W @=  T/W
\end{CD}
\]
in which the map $\mu$ is bijective.  In the top row of this diagram, the set $T\q W$, the set $\fP(G)$, and the map $\mu$ are independent of the parameter $s$.  
\medskip

We start by defining the vertical maps $i_s$, $\pi_s$ in the diagram.  Let $s \in \Cset^{\times}$.  We will define
\begin{align}
i_s: \fP(G) \to T/W, \quad  (\Phi, \rho) \mapsto  \Phi (\Frob, T_s)\end{align}
\begin{align}
\pi_s : T\q W \to T/W, \quad (t,w) \mapsto t \cdot \gamma (T_s)
\end{align}
where $(\Phi, \rho)$ is a Reeder parameter, and $(t,w) \in T\q W$. We note that 
\[
 \Phi (\Frob, T_s) = t \cdot \gamma(T_s)
\]
so that the diagram is commutative.

$\bullet$ Let $s = 1$, and assume, for the moment, that $C_H(t)$ is connected.    The map $\mu$ in Theorem 4.1  sends $(t, \tau) $ to $(\Phi ,\rho)$.   We note that
\[
t = \Phi(\Frob, T_1) = \Phi(\Frob,1).\]
The map $\mu$  determines the map
\[
(t, \tau) \mapsto (t, \Phi(1,u_0), \rho)
\]
which, in turn, determines the map
\[
\tau \mapsto (\exp(x), \rho)
\]
which is the Springer correspondence for the Weyl group $W_H(t)$. 

$\bullet$  Now let $s = \sqrt q$ where $q$ is the cardinality of the residue field $k_F$ of $F$.    We now link our result to the representation theory of the $p$-adic group $\mathcal{G}$ as follows.   As in \S 3, let
 \[
\sigma: =  \Phi (\Frob, T_{\sqrt q}), \quad \quad u: = \Phi(1,u_0).
\]
Then we have 
\[
\sigma u \sigma^{-1} = u^q
\]
and the triple $(\sigma, u, \rho)$ is a Kazhdan-Lusztig triple.

The correspondence $\sigma \mapsto \chi_{\sigma}$ between points in $T$ and unramified quasicharacters of $\mathcal{T}$ can be fixed by the relation
\[
\chi_{\sigma}(\lambda(\varpi_F)) = \lambda(\sigma)
\]
where $\varpi_F$ is a uniformizer in $F$, and $\lambda \in X_*(\mathcal{T}) = X^*(T)$.   The Kazhdan-Lusztig triples $(\sigma, u, \rho)$ parametrize the irreducible constituents of the (unitarily) induced representation
\[
\Ind_{\mathcal{B}}^{\mathcal{G}}(\chi_{\sigma}\otimes 1).
\]

Note that
\[
i_{\sqrt q}: (\Phi,\rho) \mapsto \sigma
\]
so that $i_{\sqrt q}$ is the \emph{infinitesimal character}.  The infinitesimal character is denoted $\mathbf{Sc}$ in \cite[VI.7.1.1]{Renard} ($\mathbf{Sc}$ for \emph{support cuspidal})

Since $\mu$ is bijective and the diagram is commutative,  the number of points 
in the fibre of the $q$-projection $\pi_{\sqrt q}$ equals the number of inequivalent irreducible constituents of
$\Ind_{\mathcal{B}}^{\mathcal{G}}(\chi_{\sigma}\otimes 1)$:
\begin{align}
|\pi^{-1}_{\sqrt q}(\sigma)| = |\Ind_{\mathcal{B}}^{\mathcal{G}}(\chi_{\sigma}\otimes 1) |
\end{align}
The $q$-projection $\pi_{\sqrt q}$ is a model of the infinitesimal character $\mathbf{Sc}$. 
\medskip

Our formulation leads to  Eqn.(24), which  appears to have some predictive power.    Note that the definition of the $q$-projection $\pi_{\sqrt q}$ depends only on the $L$-parameter $\Phi$.  An $L$-parameter determines an $L$-packet, and does not determine the number of irreducible constituents of the $L$-packet.

 \section{Examples}

\textsc{Example~1.}  \emph{Realization of the ordinary quotient}  $T/W$.    Consider an $L$-parameter $\Phi$ for which $\Phi | _{\SL(2,\Cset)} = 1$.  Let $t = \Phi(\Frob)$. Then 
\[
G_{\Phi} : = C(im \, \Phi) = C(t)
\]
so that $G_{\Phi}$ is connected and acts trivially in homology. Therefore $\rho$ is the unit representation $1$.

Now $t$ is a semisimple element in $G$, and all such semisimple elements arise.  Modulo conjugacy in $G$, the set of such $L$-parameters $\Phi$ is  parametrized by the quotient $T/W$.  Explicitly, let
\[
\fP_1(G): = \{\Phi \in \fP(G):   \Phi |_{ \SL(2,\Cset)} = 1 \}.
\]  
   Then we have a canonical bijection
\[
\fP_1(G) \to T/W, \quad \quad (\Phi,1) \mapsto \Phi(\Frob,1)
\]
which fits into the commutative diagram
\[
\begin{CD}
\fP_1(G) @>>>T/W \\
@VVV              @VVV   \\
\fP(G)  @>>> T \q W
\end{CD}
\]
where the vertical maps are inclusions. 
\bigskip

\textsc{Example~2.} \emph{The general linear group}.  Let $\cG = \GL(n), G = \GL(n,\Cset)$. Let
\[
\Phi = \chi \otimes \tau(n)
\]
where $\chi$ is an unramified quasicharacter of $\cW_F$ and $\tau(n)$ is the irreducible $n$-dimensional representation of $\SL(2,\Cset)$.    By local classfield theory, the quasicharacter $\chi$ factors through $F^{\times}$.   In the local Langlands 
correspondence for $\GL(n)$, the image of $\Phi$ is the unramified twist $\chi  \circ  \det$ of the Steinberg representation $\St(n)$.  

The sign representation $sgn$ of the Weyl group $W$ has Springer parameters $(\mathcal{O}_{prin},1)$, where $\mathcal{O}_{prin}$ is the principal orbit in $\mathfrak{gl}(n,\Cset)$. In the \emph{canonical} correspondence between irreducible representations of $S_n$ and conjugacy classes in $S_n$, the trivial representation of
$W$ corresponds to the  conjugacy class containing  the $n$-cycle $w_0 = (123 \cdots n)$.

Now $G_{\Phi} = C(im \, \Phi)$ is connected \cite[\S3.6.3]{CG}, and so acts trivially in homology. 
   Therefore $\rho$ is the unit representation $1$.  The image $\Phi(1,u_0)$ is a regular nilpotent, i.e. a nilpotent with one Jordan block (given by the partition of $n$ with one part).   The corresponding conjugacy class in $W$ is $\{w_0\}$.   The corresponding irreducible component  of the extended quotient is
$$T^{w_0}/Z(w_0) = \{(z,z, \ldots,z): z \in \Cset^{\times}\}  \simeq \Cset^{\times}.$$  This is our model, in the extended quotient picture, of the complex $1$-torus of all unramified twists of the Steinberg representation $\St(n)$. The map from $L$-parameters to pairs $(w,t) \in T \q W$ is given by
\[
\chi \otimes \tau(n) \mapsto (w_0, \chi(\Frob), \dots, \chi(\Frob)).
\]
Among these representations, there is one real tempered representation, namely $\St(n)$, with $L$-parameter $1 \otimes \tau(n)$, 
attached to the principal orbit $\cO_{prin} \subset G$.

More generally, let 
\[
\Phi = \chi_1 \otimes \tau(n_1) \oplus \cdots \oplus \chi_k \otimes \tau(n_k)
\]
where $n_1 + \cdots + n_k = n$ is a partition of $n$.   This determines the unipotent orbit $\cO(n_1, \ldots, n_k) \subset G$.  There is a  conjugacy class
in $W$ attached canonically to this orbit: it contains the product of disjoint cycles of lengths $n_1, \ldots, n_k$. The fixed set is a complex torus, and the
component in $T\q W$ is a product of symmetric products of complex $1$- tori.

\bigskip

\textsc{Example 3}.   \emph{The exceptional group of type $\rG_2$}.  This example contains a Reeder parameter $(\Phi,\rho)$  with 
$\rho \neq 1$. 
The torus $\cT$ is identified with $F^\times\times F^\times$. We take 
$\lambda=\chi\otimes\chi$ where $\chi$ is a nontrivial quadratic character of 
$\integers_F^\times$. 

Here we have $H=\SO(4,\Cset)\simeq\SL(2,\Cset)\times\SL(2,\Cset)/\{\pm I\}$. This complex reductive Lie group is neither simply-connected nor of adjoint type.
We have  $W^\fs=W_H=\Zset/2\Zset\times \Zset/2\Zset$. 
We will write
\[\SL(2, \Cset) \times \SL(2, \Cset) \longrightarrow H^\fs, \quad
(x,y) \mapsto [x,y],\]
\[T_{s,s'}=[T_s,T_{s'}],\quad s,s'\in \Cset^\times.\]

We have
\[\fQ(G)_{\hat\lambda}\to T\q W_H\simeq \Aset^1\sqcup \Aset^1\sqcup pt_1\sqcup
pt_2\sqcup pt_*\sqcup T/W_H,\]
where 
\begin{itemize}
\item
one $\Aset^1$ corresponds to $(\Phi,1)$ with $\Phi(\Frob,1)=[I,T_s]$
and $\Phi(1,u_0)=[u_0,I]$, 
\item the other $\Aset^1$ corresponds to $(\Phi,1)$ 
with $\Phi(\Frob,1)=[T_s,I]$
and $\Phi(1,u_0)=[I,u_0]$, 
\item $pt_1$ corresponds to $(\Phi,1)$ 
with $\Phi(\Frob,1)= T_{1,1}$
and $\Phi(1,u_0)=[u_0,u_0]$,
\item $pt_2$ corresponds to $(\Phi,1)$
with $\Phi(\Frob,1)= T_{1,-1}$
and $\Phi(1,u_0)=[u_0,u_0]$, 
\item
$T/W_H$ corresponds to $(\Phi,1)$
with $\Phi(\Frob,1)= T_{s,s'}$ $s,s'\in \Cset^\times$,
and $\Phi(1,u_0)=[I,I]$,
\item
$pt_*$ corresponds to $(\Phi,\sgn)$
with $\Phi(\Frob,1)= T_{i,i}$, $i=\sqrt{ -1}$
and $\Phi(1,u_0)=[I,I]$.
\end{itemize}

\bigskip

\emph{Acknowledgement}. We would like to thank  A. Premet for drawing our attention to reference \cite{CG}.

Anne-Marie Aubert, Institut de Math\'ematiques de Jussieu, U.M.R. 7586 du C.N.R.S., Paris, France\\
Email: aubert@math.jussieu.fr\\
Paul Baum, Pennsylvania State University, Mathematics Department, University Park, PA 16802, USA\\
Email: baum@math.psu.edu\\
Roger Plymen, School of Mathematics, Alan Turing building, Manchester University, Manchester M13 9PL, England\\
Email: plymen@manchester.ac.uk
\end{document}